
\documentclass[reqno,a4paper]{amsart}
\usepackage[utf8]{inputenc}
\usepackage[T1]{fontenc}
\usepackage[british]{babel}
\usepackage{amsmath,amsthm,amssymb}
\usepackage[foot]{amsaddr}



\usepackage{mathrsfs}

\usepackage{graphicx}
\usepackage{wrapfig}
\usepackage{subfig}
\usepackage{enumerate}
\usepackage[usenames,dvipsnames]{color}
\usepackage{dsfont}
\usepackage{microtype}

\usepackage{nomencl}

\usepackage{etoolbox}

\usepackage{mathtools}
\mathtoolsset{showonlyrefs,showmanualtags}

\usepackage{pstricks,pst-plot,pst-math} 
\usepackage{pstricks-add} 


\setlength{\textwidth}{\paperwidth}
\addtolength{\textwidth}{-2in}
\calclayout

\usepackage{esint}

\newtheorem{thm}{Theorem}[section]

\newtheorem{lem}[thm]{Lemma}
\newtheorem{prop}[thm]{Proposition}

\theoremstyle{definition}

\theoremstyle{remark}
\newtheorem{rem}[thm]{Remark}

\numberwithin{equation}{section}


\newcommand{\DeclareAutoPairedDelimiter}[3]{%
	\expandafter\DeclarePairedDelimiter\csname Auto\string#1\endcsname{#2}{#3}%
	\begingroup\edef\x{\endgroup
		\noexpand\DeclareRobustCommand{\noexpand#1}{%
			\expandafter\noexpand\csname Auto\string#1\endcsname*}}%
	\x}


\DeclareAutoPairedDelimiter{\abs}{\lvert}{\rvert}
\DeclareAutoPairedDelimiter{\norm}{\lVert}{\rVert}
\DeclareAutoPairedDelimiter{\bra}{(}{ )}
\DeclareAutoPairedDelimiter{\pra}{[}{]}
\DeclareAutoPairedDelimiter{\set}{\{}{\}}
\DeclareAutoPairedDelimiter{\skp}{\langle}{\rangle}

\DeclareMathAlphabet{\mathup}{OT1}{\familydefault}{m}{n}
\newcommand{\dx}[1]{\mathop{}\!\mathup{d} #1}



\DeclareMathOperator*{\supp}{supp}


\newcommand{\N}{\mathds{N}}
\newcommand{\R}{\mathds{R}}


\newcommand{\cL}{\ensuremath{\mathcal L}}

\newcommand{\sL}{\ensuremath{\mathscr L}}

\newcommand{\tzeta}{\ensuremath{\tilde \zeta}}


\usepackage[colorinlistoftodos,prependcaption,textsize=tiny]{todonotes}



\usepackage[pdfusetitle]{hyperref}
\definecolor{darkblue}{rgb}{0,0,0.6}
\hypersetup{
    pdfauthor={Simon Eberle},
    colorlinks=true,       
    linkcolor=darkblue,          
    citecolor=darkblue,        
    filecolor=darkblue,      
    urlcolor=darkblue           
}

\pagestyle{headings}

\setcounter{tocdepth}{3}

\author{Simon Eberle$^1$}
\address{$^1$Faculty for mathematics, University of Duisburg-Essen.}

\email{simon.eberle@uni-due.de}

\thanks{AMS classification: 35B08, 35B30, 35B40, 35B40, 35C07, 35K57}

\title[Front blocking versus propagation with drift]{Front blocking versus propagation in the presence of drift term varying in the direction of propagation}

\let\rho\varrho
\let\epsilon\varepsilon

\begin{document}

\begin{abstract}
\noindent
In this paper we derive quantitative conditions under which a compactly supported drift term depending on the direction of propagation blocks a traveling wave solution or lets it pass almost unchanged.
We give explicit conditions on the drift term for blocking as well as almost unchanged propagation in one spatial dimension.

\end{abstract}
	\maketitle

\section{Introduction}

The object of this paper is the investigation of transition fronts in one spatial dimension subject to a compactly supported drift term in the direction of propagation and depending on the direction of propagation.
In such a setting classical traveling waves are impossible and the following two cases are possible

\begin{enumerate}	
	\item \label{ref_blocking} blocking, i.e. no propagation of anything front like.
	\item \label{ref_unchanged_propagation}`almost unchanged propagation', i.e. the effect of the drift term fades out for large time (up to maybe a possible shift of the front). 
\end{enumerate}	

In this paper we are able to give sufficient (a priori) conditions on the drift term, such that case \ref{ref_blocking} or \ref{ref_unchanged_propagation}   occur. 

To the best knowledge of the author there are no results on that matter available yet. So we hope to offer a first partial understanding on what happens to traveling waves subject to drift disturbance that varies in the direction of propagation.

The investigation of traveling waves in cylinders, also subject to drift, has been done in depth in the seminal paper \cite{traveling_fronts_in_cylinders}. However the drift term has been required to be independent of the direction of propagation, in order to allow for classical traveling waves. Since then the notion of traveling waves has been broadened to more general media, i.e. pulsating fronts for periodic media \cite{BerestyckiHamelPeriodicExcitableMedia} and the very general transition fronts for very general media \cite{BerestyckiHamelGeneralizedTransitionFronts}.
In recent years there have been investigations of existence and non existence of transition fronts in outer domains with a compactly supported obstacle \cite{MatanoObst}, in cylinders with varying nonlinearity \cite{Zlatos,change_of_speed_1} and, with respect to this work especially interesting, in opening or closing cylinders \cite{front_blocking,suden_opening,change_of_speed2}.

The subject of this paper are entire solutions of the generalized initial value problem

	\begin{align} \label{DiffEqu}
	\begin{cases}
	\partial_t u(t,x) - \partial_{xx} u(t,x) +k(x) \partial_{x} u(t,x) = f\bra{u(t,x)} &\text{ for all } (t,x) \in \R \times \R, \\
	u(t,x) - \phi(x+ct) \rightarrow 0 \text{ as } t \rightarrow -\infty &\text{ uniformly in } \R,
	\end{cases}
	\end{align}
	where  $k \in C_c^\infty(\R)$, $\supp k \subset [-x_0,0]$, $x_0>0$ and $f$ is a bistable nonlinearity i.e. 
\begin{align}
\label{F_1} &f \in C^{2}([0,1]), \\
\label{F_2} &f(0)  = 0, f(1) = 0, \\
\label{F_3}&f'(0) <0 , f'(1) <0, \\
\label{F_4}  &f<0 \text{ on } (0, \theta), f >0 \text{ on } (\theta, 1) \text{ for some } \theta \in (0,1), 
\\
\label{F_8}& \int \limits_0^1 f(u) \dx{u}  >0 .
\end{align}

In \eqref{DiffEqu} we mean with $\phi$ and $c$ the unique (up to translation) traveling wave profile $\phi$ and unique speed $c>0$ for the one-dimensional problem (only dependent on the nonlinearity $f$) that solves
\begin{align} \label{ODE_travelling_wave}
\begin{cases}
\phi^{\prime \prime}(z) - c \phi^\prime(z) + f(\phi(z)) =0, \\
\phi(- \infty) = 0, \phi(+ \infty)=1, \\
0<\phi(z) < 1\text{ for all } z \in \R ,\\
\phi'(z) >0 \text{ for all } z \in \R.
\end{cases}
\end{align}
For details and proofs see e.g. \cite{FifeMcLeod}.

	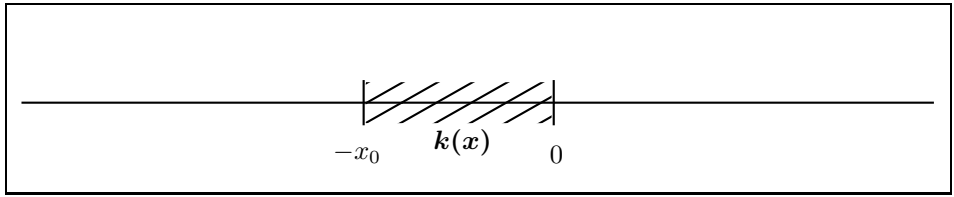
\begin{figure}[!h]
		\psset{xunit=1cm,yunit=1cm}
		\frame{
			\begin{pspicture*}
			(-5.2,-1.2)(7.1,1.3)
		\psframe[fillstyle=vlines,hatchsep=0.2,hatchangle=120,linecolor=white](-0.5,-0.3)(2,0.3)
			\psline(-5,0)(7,0)
				\psline(-0.5,-0.3)(-0.5,0.3)
					\psline(2,-0.3)(2,0.3)
			\psline(0,0)(0,-0)
			\psline(2,0)(2,-0)
			\rput[lb](0.3,-0.7){ \boldmath$ k(x)$}
			\rput[lb](-0.9,-0.8){$-x_0$}
			\rput[lb](1.95,-0.8){$0$}
			\end{pspicture*}
		}
		\caption{Infinite cylinder with transition zone}
	\end{figure}
In order to make sure that we are not investigating the empty set of solutions of \eqref{DiffEqu}, let us state the following proposition.

\begin{prop}[Existence and uniqueness]\label{prop:existence_uniqueness}
Let $f$ be as above. Then there is a \emph{unique} entire solution $u(t,x)$ of \eqref{DiffEqu} such that $0 < u(t,x) <0$ and $\partial_t u(t,x) >0$ for all $(t,x) \in \R \times \R$.
\end{prop}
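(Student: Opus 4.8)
The plan is to construct the solution by a sub- and supersolution argument combined with a parabolic approximation scheme, and then to obtain monotonicity and uniqueness from the comparison principle, which remains valid for the operator $\partial_t-\partial_{xx}+k\partial_x-f(\cdot)$ since the drift is only a first-order term. The structural observation is that inserting the unperturbed profile into the full equation leaves only the residual $k(x)\phi'(x+ct)$: using the traveling-wave identity $\phi''-c\phi'+f(\phi)=0$ one computes
\begin{equation}
\partial_t \phi(x+ct)-\partial_{xx}\phi(x+ct)+k(x)\partial_x\phi(x+ct)-f\bigl(\phi(x+ct)\bigr)=k(x)\phi'(x+ct).
\end{equation}
Because $\supp k\subset[-x_0,0]$ is fixed while $\phi'$ decays exponentially, on the support of $k$ the argument $x+ct$ tends to $-\infty$ as $t\to-\infty$, so this residual is exponentially small in $t$; this is the feature that makes the drift invisible to the front near $t=-\infty$.

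First I would build an ordered pair of a super- and a subsolution of the form
\begin{equation}
\bar u(t,x)=\phi\bigl(x+ct+\xi(t)\bigr)+q(t),\qquad \underline u(t,x)=\phi\bigl(x+ct-\xi(t)\bigr)-q(t),
\end{equation}
with $q(t)=\delta e^{\omega t}$ and $\xi'=Cq$, so that $\xi(t),q(t)\to0$ as $t\to-\infty$. Feeding $\bar u$ into the operator and using the residual identity, the sign is governed by $\xi'\phi'+q'+k\phi'-\bigl(f(\phi+q)-f(\phi)\bigr)$. By the Fife–McLeod mechanism the term $-\bigl(f(\phi+q)-f(\phi)\bigr)$ is bounded below by $\beta q$ where $f'\le-\beta<0$ near the stable states $0$ and $1$, while in the intermediate region $\phi'$ is bounded below and the term $\xi'\phi'=Cq\phi'$ dominates; choosing $\omega$ strictly smaller than $c$ times the exponential decay rate of $\phi'$ makes these positive contributions beat the exponentially small residual $k\phi'$, yielding $\mathcal{L}[\bar u]\ge0$ and, symmetrically, $\mathcal{L}[\underline u]\le0$.

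Next I would run the approximation: for each $n$ solve the Cauchy problem on $(-n,\infty)\times\R$ with datum $u_n(-n,\cdot)=\phi(\cdot-cn)$. The comparison principle traps $\underline u\le u_n\le\bar u$, and interior parabolic estimates with a diagonal extraction produce an entire limit $u$ still satisfying $\underline u\le u\le\bar u$; since $\xi(t),q(t)\to0$ and $\phi$ is uniformly continuous, this squeezing forces $u(t,x)-\phi(x+ct)\to0$ uniformly as $t\to-\infty$, and $0<u<1$ follows from the strong maximum principle. For the strict monotonicity, I would compare $u(\cdot+h,\cdot)$ with $u(\cdot,\cdot)$ for $h>0$: the squeezing gives $u(t+h,x)\ge u(t,x)$ for $t$ sufficiently negative (the shift $ch$ beats the vanishing $\xi,q$), the comparison principle propagates the ordering to all times, and dividing by $h$, letting $h\downarrow0$, and applying the strong maximum principle to the linearized equation $\partial_t v-\partial_{xx}v+k\partial_x v=f'(u)v$ gives either $v>0$ or $v\equiv0$, the latter excluded because $v=\partial_t u$ is not identically zero (its $t\to-\infty$ profile is $c\phi'>0$).

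Finally, for uniqueness let $u_1,u_2$ be two such solutions. Given the common uniform limit at $t=-\infty$, for any $\tau>0$ the closeness to $\phi$ on $\{t\le T\}$ together with the strict monotonicity of $\phi$, and the bistable stability of the tails near $0$ and $1$, yields the ordering $u_1(t+\tau,x)\ge u_2(t,x)$ for $t\le T$; the comparison principle propagates it to all $t$, and letting $\tau\downarrow0$ gives $u_1\ge u_2$, whence $u_1=u_2$ by symmetry. The main obstacle throughout is the drift residual $k\phi'$: everything hinges on its exponential smallness as $t\to-\infty$, so that the classical bistable sub/supersolution calculus can absorb it. Once that is secured, the comparison principle—unaffected by the first-order drift—carries the existence, monotonicity and uniqueness arguments through as in the unperturbed case.
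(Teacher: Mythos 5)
Your overall strategy --- sub/supersolution barriers that absorb the exponentially small residual $k(x)\phi'(x+ct)$, Cauchy problems started at $t=-n$, the comparison principle, and a limit --- is the same scheme the paper invokes (it copies Berestycki--Hamel--Matano; see the appendix's $w^\pm$). The difference is the choice of barriers: the paper uses reflected-wave barriers $\min\{\phi(x+ct+\xi)+\phi(-x+ct+\xi),1\}$ and, crucially, a subsolution $w^-(t,x)=\sup_{s<0}\tilde{w}^-(t+s,x)$ that is nondecreasing in $t$ by construction, whereas you use additive Fife--McLeod barriers $\phi(x+ct\pm\xi(t))\pm q(t)$. Your barrier computation itself is sound, and the key quantitative point (exponential smallness of $k\phi'$ as $t\to-\infty$ because $\supp k$ is fixed while $x+ct\to-\infty$ there) is exactly the right observation.

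The gap is in the monotonicity and uniqueness steps. You claim the squeezing $\underline u\le u\le\bar u$ gives $u(t+h,x)\ge u(t,x)$ for $t$ sufficiently negative because ``the shift $ch$ beats the vanishing $\xi,q$''. It does not: the bound you actually get is $u(t+h,x)-u(t,x)\ge\phi(x+ct+ch-\xi(t+h))-\phi(x+ct+\xi(t))-q(t+h)-q(t)$, and while the difference of translates of $\phi$ is positive, it decays to $0$ as $|x+ct|\to\infty$ (like $e^{-\mu|x+ct|}$), whereas $q(t+h)+q(t)>0$ is independent of $x$; so the claimed ordering fails in the spatial tails for every fixed $t$. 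The same defect affects your uniqueness step: uniform $\epsilon$-closeness of $u_1,u_2$ to $\phi(\cdot+ct)$ does not yield the pointwise ordering $u_1(t+\tau,\cdot)\ge u_2(t,\cdot)$, because where both solutions are within $\epsilon$ of $0$ or $1$ the time shift gains nothing while the additive error persists. Both steps are repairable --- the paper's device is to take the time-monotone subsolution $w^-$ itself as initial datum, so that $u_n(-n+h,\cdot)\ge w^-(-n+h,\cdot)\ge w^-(-n,\cdot)=u_n(-n,\cdot)$ and the comparison principle propagates $u_n(\cdot+h,\cdot)\ge u_n$, with uniqueness then obtained by a sliding argument in $\tau$ that absorbs the additive error in the tails using $f'<0$ near the stable states --- but as written your ordering claims do not follow from what precedes them.
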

The proof for Proposition \ref{prop:existence_uniqueness} can be obtained almost literally copying the proof of Theorem 2.1 in \cite{MatanoObst} or Appendix A in \cite{change_of_speed_1}.(For further details see Appendix \ref{section:existence_and_uniqueness}.)
\\[0.5cm]
For blocking (case \ref{ref_blocking}) we are able to give an explicit criterion involving the net drift and some term that takes into account the concentrations of $k$ as formulated in 
\begin{thm} \label{theorem:blocking}
	There is a constant $C(f)>0$ only depending on $f$ such that if 
	\begin{align} \label{blocking_condition}
	C(f) > \exp \bra { - \int_\R k(s) \dx{s}  }   \bra {  2+ \max \set {   14, \bra {    \int \limits_{-x_0}^0    \exp \bra {  \int \limits_{-x_0}^t k(s) \dx{s}  }  \dx{t}            }^{\frac{1}{2}}                }       }^2
	\end{align}
	holds, the unique solution of \eqref{DiffEqu} is blocked to the left, i.e. there exists a stationary supersolution $w: \R \rightarrow \R$ of \eqref{DiffEqu} such that
	\begin{align}
	u(t,x) \leq w(x) \quad \text{ for all } t \in \R, x \in \R
	\end{align}
	and $w(x) \rightarrow 0$ as $x \rightarrow -\infty$.
\end{thm}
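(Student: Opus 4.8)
The plan is to prove blocking by constructing a \emph{stationary} supersolution $w$ and then invoking the parabolic comparison principle, so that $u(t,\cdot)\le w$ for all $t$ once the ordering holds at one very negative time. First I would reduce everything to the claim that there is an increasing $w:\R\to(0,1]$, a (distributional) stationary supersolution of \eqref{DiffEqu}, i.e. $-w''+k(x)w'-f(w)\ge 0$, with $w\equiv 1$ for $x\ge 0$ and $w(x)\to 0$ as $x\to-\infty$. Granting such a $w$, the comparison step runs as follows. Since $u(t,\cdot)-\phi(\cdot+ct)\to0$ uniformly as $t\to-\infty$, on any fixed compact set $u(t,\cdot)\to0$ while $w$ is bounded below by a positive constant, and for $x\to+\infty$ one has $u\le 1=w$; the only delicate region is $x\to-\infty$. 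There $\phi(x+ct)\sim e^{\lambda_+ x}$ with $\lambda_+=\tfrac12(c+\sqrt{c^2-4f'(0)})$, whereas $w(x)\sim e^{\sqrt{-f'(0)}\,x}$, and since $c>0$ forces $\sqrt{-f'(0)}<\lambda_+$, the supersolution decays strictly slower; after translating its left tail far enough we obtain $u(t_0,\cdot)\le w$ for some $t_0\ll0$, and the comparison principle underlying Proposition \ref{prop:existence_uniqueness} propagates this ordering to all $t$.

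The core is the construction of $w$, done piecewise and exploiting the divergence form of the operator. Writing $K(x)=\int_{-x_0}^x k(s)\dx{s}$ and $m=e^{-K}$ (so $m\equiv1$ on $(-\infty,-x_0]$ and $m\equiv e^{-\int_\R k}$ on $[0,\infty)$), the stationary equation reads $-(m w')'=m f(w)$, which is exactly where the factors $\exp(-\int_\R k)$ and $\int_{-x_0}^0\exp(\int_{-x_0}^t k)\dx{t}=\int_{-x_0}^0 m^{-1}$ appearing in \eqref{blocking_condition} originate. On $[0,\infty)$ I take $w\equiv1$, a genuine solution. On $(-\infty,-x_0]$, where there is no drift, I use the phase-plane orbit of $w''=-f(w)$ emanating from the rest state $0$: its first integral $\tfrac12(w')^2+F(w)=0$, with $F(u)=\int_0^u f$, makes $w$ increase monotonically but, since $F<0$ on $(0,\theta^*)$ with $F(\theta^*)=0$ for some $\theta^*\in(\theta,1)$, the orbit only turns around at height $\theta^*$; I stop it at $x=-x_0$ at some $a\in(0,\theta^*)$ with incoming slope $w'(-x_0)=\sqrt{-2F(a)}\ge0$. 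It remains to connect $(a,\sqrt{-2F(a)})$ at $x=-x_0$ to $(1,\ge0)$ at $x=0$ across the obstacle while keeping the supersolution inequality.

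The heart of the matter, and the step I expect to be the main obstacle, is this connection across $[-x_0,0]$. Because $w$ is increasing there, a strong positive net drift makes $-w''+kw'$ large and helps push $w$ up to $1$, so the construction should succeed precisely when the drift is large, i.e. when the right-hand side of \eqref{blocking_condition} is small. Quantitatively I would bound $f(w)$ by a constant on $[0,1]$, integrate the divergence-form inequality $(mw')'\le -mf(w)$ to control $w'=m^{-1}(mw')$ through the integrating factor, and integrate once more; the total rise $w(0)-w(-x_0)=1-a$ is thereby expressed in terms of $\int_{-x_0}^0 m^{-1}=\int_{-x_0}^0\exp(\int_{-x_0}^t k)\dx{t}$ and an end-slope carrying the factor $m(0)=\exp(-\int_\R k)$. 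Demanding that this rise reach $1$ while respecting $-w''+kw'\ge f(w)$ yields an inequality of exactly the shape of \eqref{blocking_condition}: the numbers $2$ and $14$ come from the crude bounds on $f$, the square reflects the quadratic (energy) dependence of the slope, and the threshold $C(f)$ encodes the barrier height $F(1)=\int_0^1 f>0$ together with the decay rate $\sqrt{-f'(0)}$.

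Finally one checks the supersolution property at the junctions: the pieces are matched in a $C^1$ fashion at $x=-x_0$, while at $x=0$ the slope drops from $w'(0^-)\ge0$ to $w'(0^+)=0$, a concave corner that only adds a nonnegative singular mass to $-w''$, so the inequality is preserved. The genuine difficulty lies in carrying the energy/slope estimate across the obstacle with constants sharp enough to land exactly on \eqref{blocking_condition}; the comparison and phase-plane parts are, by contrast, standard.
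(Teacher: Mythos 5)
Your overall architecture (stationary supersolution plus comparison principle) matches the paper's, and your divergence-form rewriting with the integrating factor $m=e^{-K}$ is precisely the weight $\psi$ the paper uses. But the core step — the connection across $[-x_0,0]$ — has a genuine gap, and in fact fails outright in a regime the theorem covers. By insisting that $w\equiv 1$ for $x\ge 0$, you force the entire rise $1-a$ to occur inside the obstacle. For any increasing supersolution one has $(mw')'\le -mf(w)\le m\,\|f\|_{L^\infty([0,1])}$, hence
\begin{align}
1-a=\int_{-x_0}^{0}w'\;\le\;\Bigl(\sqrt{-2F(a)}+\|f\|_{L^\infty}\int_{-x_0}^{0}e^{-K}\Bigr)\int_{-x_0}^{0}e^{K},
\end{align}
and the right-hand side tends to $0$ for concentrated drift ($k_\epsilon=\tfrac{K}{\epsilon}\chi_{[-\epsilon,0]}$ with $\epsilon\to0$), a regime in which \eqref{blocking_condition} is satisfiable — the paper's Remark even stresses that concentration \emph{helps} the criterion. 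So no supersolution of your prescribed form exists there. Relatedly, your heuristic that the estimate "lands on \eqref{blocking_condition}" inverts the role of $\int_{-x_0}^{0}\exp(\int_{-x_0}^{t}k)$: your construction needs this quantity \emph{large} (it caps the achievable rise), whereas \eqref{blocking_condition} requires it, jointly with $e^{-\int_\R k}$, to be \emph{small}. The correct mechanism is that the drift amplifies the slope by the factor $e^{\int_\R k}=m(0)^{-1}$ so that the transition layer can be completed \emph{to the right of} the obstacle, in the region where the weight is small; your choice $w\equiv1$ on $[0,\infty)$ forecloses exactly this.

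The paper avoids the shooting problem entirely: it constructs $w$ as a local minimizer of the weighted functional $J_{(R,a)}(w)=\int(\tfrac12|w'|^2+F(w))\psi$ in the ball $\|w-w_0\|_{H^1(\psi\,\mathrm{d}x)}\le\delta$ around $w_0=\tfrac{x}{a}\chi_{[0,a]}$ (transition layer on $(0,a)$, weight there $\psi(0)=\psi(-x_0)e^{-\int_\R k}$), proves a coercivity estimate on the sphere via explicit trace/Sobolev constants (whence the $2$ and the $14$ and the concentration integral in \eqref{blocking_condition}, in Lemma \ref{lemma:auxiliary_lemma_fixed_R}), passes to the limit $R\to-\infty$, and only then extends by $1$. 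Even if you repaired your construction by letting the phase-plane orbit of $-w''=f(w)$ complete the rise on $(0,A)$ (which does work for concentrated drift, since the amplified slope at $x=0^-$ then carries the orbit over the energy barrier $F(1)$), the resulting sufficient condition would involve $\int_{-x_0}^{0}e^{-K}$ rather than $\int_{-x_0}^{0}e^{K}$ and would neither imply nor be implied by \eqref{blocking_condition}; you would be proving a different theorem. As written, the proposal does not establish the stated result.
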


On the other hand, if the positive part of the drift term $k$ is small enough as well as its support, we prove `almost unchanged propagation' (case \ref{ref_unchanged_propagation}). Note that `almost unchanged propagation' is much stronger than being a transition front.

\begin{thm} \label{main_theorem}
	There is a constant $C(f,x_0)>0$  (only depending on $f$ and $x_0$) such that if
	$k^+:= \max \set { \max_{s \in \R} k(s),0}$ is small enough to satisfy
	\begin{align} \label{condition_theorem_propagation}
	k^+  \leq C(f,x_0),
	\end{align}
	then the unique solution $u$ of \eqref{DiffEqu} converges to a traveling wave with profile $\phi$ and speed $c$, i.e.
	\begin{align}
	u(t,x) - \phi(x+ct +\beta) \rightarrow 0 \text{ as } t \rightarrow +\infty \text{ uniformly in } \R, 
	\end{align}
	where $\beta \in \R$ is a constant shift.
\end{thm}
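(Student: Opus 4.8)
The plan is to exploit the time monotonicity from Proposition~\ref{prop:existence_uniqueness} together with the asymptotic stability, with phase shift, of the bistable front $\phi$ in the region where $k$ vanishes. Since $0<u<1$ and $\partial_t u>0$, for each $x$ the map $t\mapsto u(t,x)$ is increasing and bounded, so by interior parabolic estimates $u(t,\cdot)$ converges in $C^2_{\loc}(\R)$, as $t\to+\infty$, to a bounded stationary solution $u_\infty$ of $-u_\infty''+k\,u_\infty'=f(u_\infty)$. The statement then splits into two claims: that $u_\infty\equiv 1$ (the front is \emph{not} blocked), and that the approach to $1$ takes place with the travelling profile $\phi$, uniformly in $x$.

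For the first — and, I expect, the hardest — claim I would construct a moving subsolution adapted to the front that is able to cross the support of $k$. Trying $\underline u(t,x)=\phi\bra{x+ct+\xi(t)}-\eta(t)$ with $\eta>0$ small and using $\phi''=c\phi'-f(\phi)$, the drift operator applied to $\underline u$ equals
\begin{align}
\partial_t\underline u-\partial_{xx}\underline u+k\,\partial_x\underline u-f(\underline u)
=\phi'(z)\bra{\xi'(t)+k(x)}-\eta'(t)+\pra{f(\phi(z))-f(\phi(z)-\eta(t))},
\end{align}
where $z=x+ct+\xi(t)$. Away from the obstacle this is exactly the Fife--McLeod expression, which can be made $\le 0$ by the classical choice $\xi'=\sigma\eta'$, $\eta(t)=\eta_0 e^{-\omega t}$ with $\sigma,\omega$ depending only on $f$, using that $\phi'$ is bounded below on the set where $f'>0$. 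The only new contribution is $\phi'(z)\,k(x)$, which is harmful precisely where $k>0$, i.e.\ on the positive part $k^+$. Since $\supp k\subset[-x_0,0]$ and $\phi'$ is bounded, this defect is controlled by $\|\phi'\|_\infty\,k^+\,x_0$, and the phase loss it produces can be absorbed into a bounded extra increment of $\xi$ \emph{provided} this quantity stays below the slack available in the Fife--McLeod inequality; this is exactly what forces a smallness condition of the form $k^+\le C(f,x_0)$. Starting such a subsolution at a time $t_1$ negative enough that $u(t_1,\cdot)$ is already $C^0$-close to $\phi(\cdot+ct_1)$, the comparison principle yields $u\ge\underline u$ for $t\ge t_1$; since $\underline u(t,x)\to 1$ pointwise, we conclude $u_\infty\equiv 1$. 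This is precisely the mechanism that is destroyed when $k^+$ is large, in which case Theorem~\ref{theorem:blocking} supplies the opposing stationary supersolution.

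Once propagation is established, I would reduce the large-time behaviour to a genuinely driftless problem. Fix $T$ so large that the transition zone of $u(T,\cdot)$ (say the set where $\theta/2\le u\le 1-\theta/2$) has moved to the left of $-x_0$. For $t\ge T$ the solution is then close to $1$ on all of $\supp k=[-x_0,0]$, so $\partial_x u$ — and hence the whole drift term $k\,\partial_x u$ — is exponentially small there, uniformly in $x$, while on the rest of the line $k$ vanishes identically. Thus for $t\ge T$ the function $u$ solves the pure bistable equation $\partial_t u-\partial_{xx}u=f(u)$ up to a source term decaying exponentially as $t\to+\infty$, with Cauchy datum $u(T,\cdot)$ that is a uniformly small perturbation of a translate $\phi(\cdot+a)$ of the front.

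The final step is the classical stability argument. By the Fife--McLeod sandwiching — there exist $q,\omega,\sigma>0$ depending only on $f$ such that $\phi\bra{x+ct\pm\sigma q(1-e^{-\omega(t-T)})+a}\pm q e^{-\omega(t-T)}$ are sub/supersolutions of the driftless equation — the datum at $t=T$ is trapped between two translates of $\phi$ whose phases differ by $O(q)$; absorbing the exponentially small residual drift term into the correction $\eta$ via a Gronwall/comparison estimate, one obtains
\begin{align}
\phi\bra{x+ct+a_-}-\delta(t)\le u(t,x)\le \phi\bra{x+ct+a_+}+\delta(t),\qquad t\ge T,
\end{align}
with $\delta(t)\to0$ and the phases $a_\pm$ converging to a common limit $\beta$, whence $u(t,x)-\phi(x+ct+\beta)\to0$ uniformly in $x$. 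The two delicate points are the quantitative crossing estimate that pins down $C(f,x_0)$ in the second step, and checking that the exponentially decaying residual drift does not spoil the convergence of the phases in the last step; both are handled by keeping the corrections $\eta$ in the barriers summable in time.
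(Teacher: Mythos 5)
Your first two steps essentially reproduce the paper's strategy: the moving subsolution $\phi\bra{x+ct+\xi(t)}-\eta(t)$ whose phase correction absorbs the drift defect is exactly the content of Lemma \ref{Lemma:apriori_lower_estimate} (and it is indeed where the smallness condition on $k^+$ enters), and the observation that once the transition zone has passed to the left of $-x_0$ the term $k\,\partial_x u$ acts as an exponentially decaying source is what drives Lemma \ref{Lemma_estimation_above}. One caveat on the defect estimate: the time-uniform bound $\norm{\phi'}_{L^\infty}k^+x_0$ is not enough to keep $\xi$ bounded, since a non-decaying defect would force $\xi'(t)\gtrsim k^+$ for all time. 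What actually makes the phase correction summable is the exponential decay $\phi'(z)\le C_\phi e^{-\mu\abs{z}}$ evaluated on the fixed compact set $\supp k$ while $z=x+ct+\xi(t)\to+\infty$ there, so the defect decays like $k^+e^{-\mu ct}$; this is precisely how the paper obtains a convergent $V^-$ and the explicit constant $C(f,x_0)$. You gesture at this ("corrections summable in time") but the mechanism should be the decay of $\phi'$, not boundedness.

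The genuine gap is in your final step. The Fife--McLeod barriers only yield a trapping of the form $\phi(x+ct+a_-)-\delta(t)\le u(t,x)\le\phi(x+ct+a_+)+\delta(t)$ with two \emph{fixed} phases $a_-<a_+$ (the constants $\beta^\pm$ of Lemmata \ref{Lemma:apriori_lower_estimate} and \ref{Lemma_estimation_above}); nothing in the comparison argument makes $a_-$ and $a_+$ converge to a common limit, and asserting that they do is essentially restating the theorem. This is exactly the point where an additional mechanism is required: the paper passes to the moving frame, uses the weighted Lyapunov functional $\cL$ together with the a priori decay estimates of Lemma \ref{apriori_bounds_u} to identify the $\omega$-limit along a subsequence as a single translate $\phi(\cdot+\beta)$, and then invokes the stability Lemma \ref{lemma_stability} to upgrade subsequential convergence to full uniform convergence. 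An alternative would be an explicit squeezing iteration contracting the gap $a_+-a_-$ geometrically over successive time intervals, but you would have to carry that out; as written, your argument proves only that $u$ stays asymptotically inside a tube of fixed width around the family of translates of $\phi$, not that it selects one.
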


The strategy of the proof of Theorem  \ref{theorem:blocking} is to construct the stationary supersolution $w$ as local minimizer of an appropriate functional in some weighted Sobolev space. The main observation is that \eqref{DiffEqu} becomes variational if the drift term is encoded in some weight. With this `trick' one can use ideas from \cite{front_blocking}, where the authors show that a neck can be introduced into a given tube in such a way that propagation gets blocked by constructing a stationary supersolution that vanishes behind the neck.

The main problem in the proof of Theorem \ref{main_theorem} is to achieve propagation, or more precisely, that something similar to the front passes by the disturbance. The rest is establishing a priori estimates and stability results for a Lyapunov function argument (similar to \cite{change_of_speed_1,FifeMcLeod}).

The paper is organized as follows. First we clarify assumptions and notation. Then, for the sake of completeness we shortly address the question of existence and uniqueness in section \ref{section:existence_and_uniqueness}. In section \ref{section:blocking} we give the strategy of the proof and the proof of Theorem \ref{theorem:blocking}. In section \ref{section:existence/propagation} we prove Theorem \ref{main_theorem}. 
\begin{rem}
	\begin{enumerate}
	\item
	The proof of Theorem \ref{theorem:blocking} can be adapted to the more general case 
	\begin{align} 
	\begin{cases}
	\partial_t u - \Delta u +k \cdot \nabla u = f(u) &\text{ in } D, \\
	\frac{\partial u}{\partial \nu} = 0 &\text{ on } \partial D, \\
	u(t,x) - \phi(x_1+ct) \rightarrow 0 \text{ as } t \rightarrow -\infty &\text{ uniformly in } D,
	\end{cases}
	\end{align}
	where $D:= \R \times \Omega$ is a cylindrical domain, $k \in C_c^\infty(\bar D, \R^n)$, $\supp k \subset [-x_0,0] \times \bar{\Omega}$ and $x_0>0$, if $k$ has a `logarithmic potential', i.e. there is $\psi\in C^\infty(D)$ such that
	\begin{align}
	k= -\nabla \log(\psi) = - \frac{\nabla \psi}{\psi} .
	\end{align}
	\item 
	Note that $14$ in \eqref{blocking_condition} is only a technical constant which is not optimal. It can be improved, even using the same method. But since there is no reason to assume that the criterion in Theorem \ref{theorem:blocking} is optimal, we did not bother too much to optimize constants.
	\item Concerning Theorem \ref{theorem:blocking} we understand $\int_{\R}k$ as the net drift that equally weights the positive and the negative part of $k$ against each other  and we understand  $ \int \limits_{-x_0}^0    \exp \bra {  \int \limits_{-x_0}^t k(s) \dx{s}  }  \dx{t}     $ as a measure for the concentration of the drift term. To illustrate that let us look at the following family of drift functions:
	\begin{align}
		k_\epsilon := \frac{K}{\epsilon} \chi_{[-\epsilon,0]} \quad , \text{ where } \epsilon >0 \text{ and } K>0. 
	\end{align}
	A calculation reveals that for all $\epsilon >0$
	\begin{align}
		\exp \bra { - \int \limits_\R k_\epsilon    } = \exp \bra {-K } .
	\end{align} 
	Because $\supp k_\epsilon = [-\epsilon,0]$, we have that $x_0(\epsilon)= \epsilon$. Therefore we get by another direct calculation
	\begin{align}\label{concentration_example}
		\int \limits^ 0_{-x_0(\epsilon)} \exp \bra { \int \limits_{-x_0(\epsilon)}^t k_\epsilon(s) \dx{s}    }\dx{t} = \frac{\epsilon}{K} \bra { \exp(K) -1  }.
	\end{align}
	Let now $K>0$ be large enough such that 
	\begin{align} 
		\frac{1}{K} \bra { \exp(K) -1    } > 14^2
	\end{align}
	then
	\begin{align}
		\exp \bra { - \int_\R k_1(s) \dx{s}  }   \bra {  2+ \max \set {   14, \bra {    \int \limits_{-x_0(1)}^0    \exp \bra {  \int \limits_{-x_0(1)}^t k_1(s) \dx{s}  }  \dx{t}            }^{\frac{1}{2}}                }       }^2 \\
		= \bra { 2 \exp \bra {  - \frac{K}{2}} + \frac{1}{\sqrt{K}} \bra { 1 - \exp \bra { - K }}^\frac{1}{2}        }^2  \sim \frac{1}{K} \quad \text{ for } K \text{ large.}
	\end{align}
	This implies that the criterion in \eqref{blocking_condition} can be fulfilled and we have not given an  'empty' condition.
	
  We have already seen in \eqref{concentration_example} that  $ \int \limits_{-x_0}^0    \exp \bra {  \int \limits_{-x_0}^t k(s) \dx{s}  }  \dx{t}     $  measures concentration in the sense that the expression becomes small if $\abs{\supp k}$ becomes small while $\int_\R k$ is fixed.
	
	If $K$ is very large and $\epsilon >0$ is very small such that $k$ is very concentrated (think for example of 		$k_\epsilon \to \delta_0 K$ as  $\epsilon \to 0$ in the sense of measures)
	then for $\epsilon$ small enough we have that
	\begin{align}
	&\exp \bra { - \int_\R k_\epsilon(s) \dx{s}  }   \bra {  2+ \max \set {   14, \bra {    \int \limits_{-x_0}^0    \exp \bra {  \int \limits_{-x_0}^t k_\epsilon(s) \dx{s}  }  \dx{t}            }^{\frac{1}{2}}                }       }^2 \\&= 16^2 \exp(-K).
	\end{align}
	This suggests that concentration helps for the blocking criterion \eqref{blocking_condition} to be met.

	To the author this seems plausible, taking into consideration that in \cite{suden_opening} the authors showed that sudden opening of a channel leads to front blocking.
	\item From \eqref{blocking_condition} we see directly that, for the necessary blocking criterion to hold, it is necessary that
	\begin{align}
		\frac{C(f)}{16^2} > \exp \bra { - \int_\R k  } .
	\end{align}
	This tells us that - at least for the criterion to be met - concentration of (positive) $k$ alone cannot do it, if this condition is violated.
	\item  The present result is related to the question of blocking and propagation in widening channels as discussed in \cite{front_blocking} in such a way that \eqref{DiffEqu} can be understood as singular limit problem of the problem investigated in \cite{front_blocking} for diameter going to zero.
		\end{enumerate}
\end{rem}

\section*{Acknowledgement}
We thank Prof. Dr. G.S. Weiss for fruitful discussions.

\section{A necessary condition for propagation / a sufficient condition for blocking} \label{section:blocking}

The objective of this section shall be the proof of Theorem \ref{theorem:blocking}.
The proof of our result on blocking will rely mainly on the observation that problem \eqref{DiffEqu} is variational with functional
\begin{align}
J_A(w) = \int \limits_A \bra { \frac{1}{2} \abs { w' }^2 +F(w)     } \psi(x) \dx{x},
\end{align}
where $F(t):= \int_t^1 f(s) \dx{s}$ and $\psi$ is a solution of the differential equation
\begin{align} \label{Def:psi}
\psi'(x) = -k(x) \psi(x) .
\end{align}
Here we have one degree of freedom, let's say $\psi(-x_0)>0$.
Therefore $\psi(x)>0$ for all $x \in \R$ and $\psi$ can be used as weight function. With this trick of encoding the drift term in a weight function we are now in the position to use variational techniques to construct a local minimizer of the functional $J$ that will then be extended to a stationary supersolution.
The strategy of this proof is inspired by the strategy used in \cite{front_blocking} where the authors show that a thin neck can be introduced into a given channel in such a way that a traveling wave gets blocked.

Let us briefly describe our strategy in the following. The goal is to construct $w$ such that
\begin{align}
	u(t,x) \leq w(x) \quad \text{ for all } x \in \R, t \in \R
\end{align}
and $w(x)\rightarrow 0$ as $x \rightarrow -\infty$, which will be possible if condition \eqref{blocking_condition} is met.

To make $J$ well defined and to ensure that $F$ grows quadratically at infinity $f$ shall be extended to a function $f \in C^2(\R)$ such that
\begin{itemize}
	\item $f(s) >0$  if $s<0$ , $f(s)<0$ if $s>1$,
	\item $f$ is asymptotically linear, i.e. $f'(s) \rightarrow d_+ <0$ as $s \rightarrow \infty$ and $f'(s) \rightarrow d_-<0$ as $s \rightarrow -\infty$,
	\item $\norm {f''}_{L^\infty([0,1])}   = \norm {f''}_{L^\infty(\R)}$.
\end{itemize}
In order to construct such a supersolution we

\begin{enumerate}
	\item first show that for any $R<-x_0-1$ (arbitrary but fixed) there is $\delta(f, \psi,k, a)>0$ independent of $R$ such that 
	\begin{align}
		J_{(R,a)} (w) > J_{(R,a)} (w_0)
	\end{align} 
	for all $w \in H^1_{0,1} ((R,a), \psi \dx{x})$ such that $\norm {w-w_0}_{H^1((R,a), \psi \dx{x})} = \delta$ and $w_0(x) := \frac{x}{a} \chi_{[0,a]}(x)$, $a>0$ is an auxiliary constant that can be chosen in an optimal way (depending on $f$) and we understand
	\begin{align}
		H^1_{0,1}((c,d), \psi \dx{x}) := \set { v \in H^1((c,d), \psi \dx{x}) : v(c)=0, v(d)=1 }
	\end{align}
	(where boundary values are understood in the sense of traces).
	From this we can conclude by the direct method, that there is a local minimizer $w_R \in H^1_{0,1} ((R,a), \psi \dx{x})\cap \set { \norm {w-w_0}_{H^1((R,a), \psi \dx{x})} \leq \delta }$ that is a weak solution of
	\begin{align}
		-w_R'' -\frac{\psi'}{\psi} w_R' = f(w_R) \text{ in } (R,a) \\
		w_R(R)=0, w_R(a)=1
	\end{align}	
	with $0<w_R<1$ in $(R,a)$ (by comparison principle).
\item In a next step we pass to the limit $R \rightarrow -\infty$ exploiting that $\delta$ is independent of $R$ and show that the limit $w_\infty$ solves
\begin{align}
-w_\infty'' -\frac{\psi'}{\psi} w_\infty' = f(w_\infty) \text{ in } (R,a) \\
w_\infty(-\infty)=0, w_\infty(a)=1.
\end{align}
and it follows for such a solution (by the strong maximum principle) that $0<w_\infty<1$ in $(-\infty,a)$.
\item In the last step we show that if we extend $w_\infty$ by $1$ into $[a,\infty)$ it is a supersolution of \eqref{DiffEqu}.
\end{enumerate}

\begin{figure}[!h]
	\psset{xunit=1cm,yunit=1cm}
	\frame{
		\begin{pspicture*}
		(-5.2,-1.2)(7.1,1.3)
		\psframe[fillstyle=vlines,hatchsep=0.2,hatchangle=120,linecolor=white](-0.1,-0.3)(2,0.3)
		\psline(-5,0)(7,0)
		\psline(-0.1,-0.3)(-0.1,0.3)
		\psline(2,-0.3)(2,0.3)
		\rput[lb](0.5,-0.7){ \boldmath$ k(x)$}
		\rput[lb](-0.4,-0.8){$-x_0$}
		\rput[lb](1.9,-0.8){$0$}
		\rput[lb](3.9,-0.8){$a$}
		\rput[lb](-4,-0.8){$R$}
		\psline[linestyle=dashed](3.9,-0.3)(3.9,0.3)
		\psline[linestyle=dashed](-4,-0.3)(-4,0.3)
		\rput[lb](-2,-0.8){$-x_0-1$}
		\psline[linestyle=dashed](-1,-0.3)(-1,0.3)
		\end{pspicture*}
	}
	\caption{Infinite cylinder with transition zone}
\end{figure}

\begin{prop} \label{proposition:blocking:fixed_R}
	There is a constant $C(f)>0$ (only depending on $f$) such that if it satisfies condition \eqref{blocking_condition}, then for all $R<-x_0-1$ there is 
	\begin{align} \label{Def:delta}
	\delta := \frac{\alpha \sqrt{\psi(-x_0)}}{ \norm {f''}_{L^\infty}\bra {2+ \max \set { 14, \bra { \int \limits_{-x_0}^0 \exp \bra { \int \limits_{-x_0}^t k(s) \dx{s}     } \dx{t} } }^\frac{1}{2}   }} >0
	\end{align}
	(independent of $R$)
	such that for any $R<-x_0-1$ there is a local minimizer 
	\begin{align}
w_R \in H^1_{0,1}((R,a), \psi \dx{x}) \cap \set { \norm {w_R - w_0}_{H^1{((R,a), \psi \dx{x})}}    < \delta  }
\end{align}
	 of 
	\begin{align}
	J_{(R,a)} 
	\end{align}
	in $H^1_{0,1}((R,a), \psi \dx{x}) \cap \set { \norm {w_R - w_0}_{H^1{((R,a), \psi \dx{x})}}    \leq \delta  }$.
	The constant
	\begin{align} \label{Def:alpha}
		\alpha:= \min \set { \frac{1}{4} , -\frac{f'(0)}{4}   } >0
	\end{align}
	only depends on $f$, $w_0(x) := \frac{x}{a} \chi_{[0,a]}(x)$ and $a>0$ is an auxiliary constant.
\end{prop}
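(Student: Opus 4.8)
The plan is to write every competitor as $w=w_0+v$ with $v\in H^1_0((R,a),\psi\dx{x})$ (so $v(R)=v(a)=0$) and to show that $J_{(R,a)}$ strictly exceeds $J_{(R,a)}(w_0)$ on the sphere $\norm{v}_{H^1((R,a),\psi\dx{x})}=\delta$; the interior local minimiser is then produced by the direct method. First I would Taylor expand, using $F'=-f$ and $F''=-f'$,
\begin{align}
J_{(R,a)}(w_0+v)-J_{(R,a)}(w_0)=\ell(v)+Q(v)+\cR(v),
\end{align}
where $\ell(v)=\int_R^a\bra{w_0'v'-f(w_0)v}\psi\dx{x}$ is the first variation, $Q(v)=\tfrac12\int_R^a\bra{\abs{v'}^2-f'(w_0)v^2}\psi\dx{x}$ the second, and $\abs{\cR(v)}\le\tfrac16\norm{f''}_{L^\infty}\int_R^a\abs{v}^3\psi\dx{x}$.

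Since $w_0$ is only an approximate solution, $\ell$ does not vanish, and controlling it is one of the two delicate points. Integrating by parts and using $\supp k\subset[-x_0,0]$ (whence $\psi\equiv\psi(-x_0)$ on $(-\infty,-x_0)$ and $\psi\equiv\psi(0)$ on $(0,a)$, so $\psi'=0$ there) together with $f(0)=0$ and $w_0\equiv0$ on $(R,0)$, I would collapse $\ell(v)$ to terms supported on the fixed interval $(0,a)$ only. Fixing $a=(2\int_0^1F(y)\dx{y})^{-1/2}$ as the minimiser of $a\mapsto J_{(R,a)}(w_0)$ (which depends only on $f$), the bound on $\ell$ becomes an $R$-independent, purely $f$-dependent quantity.

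The heart of the proof is the coercivity of $Q$ in the weighted space. On $(R,0)$ one has $w_0\equiv0$ and $-f'(0)>0$, so both terms of $Q$ are nonnegative and $Q|_{(R,0)}\ge\alpha\norm{v}_{H^1((R,0),\psi\dx{x})}^2$ with $\alpha$ as in \eqref{Def:alpha}. On the fixed interval $(0,a)$ the term $-f'(w_0)v^2$ may be negative, and I would absorb it by a weighted Poincaré inequality exploiting $v(a)=0$; the governing constant is controlled over the transition region through $\int_{-x_0}^0\psi^{-1}\dx{t}=\psi(-x_0)^{-1}\int_{-x_0}^0\exp\bra{\int_{-x_0}^t k(s)\dx{s}}\dx{t}$, which is exactly the concentration quantity in \eqref{blocking_condition}, the numerical constant $14$ and the summand $2$ accounting for the two constant-weight pieces. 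Writing $\Lambda:=2+\max\set{14,\bra{\int_{-x_0}^0\exp\bra{\int_{-x_0}^t k(s)\dx{s}}\dx{t}}^{1/2}}$ for the bracket in \eqref{Def:delta}, the same weighted Sobolev embedding gives $\norm{v}_{L^\infty}\le\psi(-x_0)^{-1/2}\Lambda\norm{v}_{H^1((R,a),\psi\dx{x})}$, and hence $\abs{\cR(v)}\le\tfrac16\norm{f''}_{L^\infty}\psi(-x_0)^{-1/2}\Lambda\norm{v}_{H^1((R,a),\psi\dx{x})}^3$.

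It remains to balance these estimates on the sphere $\norm{v}=\delta$. The definition $\delta=\alpha\sqrt{\psi(-x_0)}/(\norm{f''}_{L^\infty}\Lambda)$ is exactly the threshold at which the quadratic coercivity dominates the cubic remainder $\cR$; condition \eqref{blocking_condition} is precisely what guarantees that at this $\delta$ the quadratic part still dominates the non-vanishing first variation $\ell$ as well, i.e.\ that the admissible window for $\delta$ is nonempty. With $J_{(R,a)}(w)>J_{(R,a)}(w_0)$ on the sphere secured, weak lower semicontinuity of $J_{(R,a)}$ on the bounded, convex, weakly closed set $\set{\norm{w-w_0}\le\delta}\cap H^1_{0,1}((R,a),\psi\dx{x})$ yields a minimiser $w_R$, and the strict inequality forces $w_R$ into the open ball, so that it is an unconstrained critical point, i.e.\ a weak solution of the stated Euler--Lagrange problem with $0<w_R<1$ by the comparison principle. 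Since every constant entering $\alpha$, $\Lambda$ and the bound on $\ell$ is independent of $R$, so is $\delta$. I expect the simultaneous balancing of the weighted Poincaré constant (carrying the concentration integral), the cubic term and the non-vanishing first variation --- which is exactly the content of \eqref{blocking_condition} --- to be the main obstacle.
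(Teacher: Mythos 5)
Your treatment of $(R,0)$ --- Taylor expansion around $w_0\equiv 0$, coercivity of the second variation from $f'(0)<0$, and control of the cubic remainder through a trace estimate at $-x_0$ together with the weighted embedding carrying $\norm{1/\sqrt{\psi}}_{L^2((-x_0,0))}$ --- is essentially the paper's Lemma \ref{lemma:auxiliary_lemma_fixed_R}, and your reading of $\delta$ as the threshold at which the quadratic part beats the cubic remainder is correct. The gap is on $(0,a)$. There $w_0(x)=x/a$ sweeps through all of $[0,1]$, so $-f'(w_0)$ is genuinely negative on a subinterval (near $w_0=\theta$), and the form $Q|_{(0,a)}(v)=\tfrac12\int_0^a\bra{\abs{v'}^2-f'(x/a)v^2}\psi(0)\dx{x}$ with only $v(a)=0$ is indefinite unless $a$ is small enough that the one-sided Poincar\'e constant satisfies $\norm{f'}_{L^\infty}(2a/\pi)^2<1$. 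You instead fix $a$ as the minimiser of $a\mapsto J_{(0,a)}(w_0)$, which need not satisfy this smallness; and your claim that the Poincar\'e constant on $(0,a)$ is ``controlled over the transition region through $\int_{-x_0}^0\psi^{-1}$'' is not right: the weight is constant on $(0,a)$, and in the paper that integral, the $14$ and the summand $2$ all arise from the trace/Sobolev estimate for the \emph{cubic} term on $(R,0)$ (the $14$ is $2\norm{\zeta}_{C^1}$ for the cutoff used in the trace estimate at $-x_0$), not from any Poincar\'e inequality. Without positivity of $Q$ on $(0,a)$ the sphere inequality fails for $v$ concentrated there, so the argument as written has a hole; it is repairable by taking $a$ small (depending only on $f$), at the price of a different constant $C(f)$, and one must then still verify $\eta\delta^2>C_\ell\delta$ against the non-vanishing first variation, which after squaring does yield a condition of the shape of \eqref{blocking_condition}.

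For comparison, the paper sidesteps all of this by never Taylor-expanding on $(0,a)$. Using $\int_0^1 f>0$ and the extension of $f$, one has $F(s)\geq K(s-1)^2$ for all $s\in\R$, hence $J_{(0,a)}(w)\geq\nu\norm{w-1}^2_{H^1((0,a),\psi\dx{x})}$ for \emph{every} competitor, while $J_{(0,a)}(w_0)\leq\beta\psi(0)$ is explicit; Young's inequality turns this into $J_{(0,a)}(w)-J_{(0,a)}(w_0)\geq\tfrac{\nu}{2}\norm{w-w_0}^2_{H^1((0,a),\psi\dx{x})}-(\nu\gamma+\beta)\psi(0)$, a coercive quadratic minus a fixed loss proportional to $\psi(0)$. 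Combined with the $(R,0)$ coercivity, positivity on the sphere reduces to $\eta\delta^2>(\nu\gamma+\beta)\psi(0)$, and since $\delta^2\propto\psi(-x_0)$ and $\psi(0)/\psi(-x_0)=\exp\bra{-\int_\R k}$, this is exactly condition \eqref{blocking_condition}. In particular no smallness of $a$, no first-variation estimate and no positivity of the second variation on $(0,a)$ are needed; the condition \eqref{blocking_condition} beats a constant loss rather than a linear one.
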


In order to prove this we will split up $(R,a)$ into the part $(0,a)$ where $\psi$ is constant and $w_0$ is linear and $(R,0)$ where $w_0 \equiv 0$ and $\psi$ does encode the behaviour of $k$.

On the second subset we will exploit the following Lemma.

\begin{lem} \label{lemma:auxiliary_lemma_fixed_R}
	With $\delta$ and $\alpha$ given as in \eqref{Def:delta}, \eqref{Def:alpha} in Proposition \ref{proposition:blocking:fixed_R}, it holds that for all $w \in H^1_0((R,0),\psi \dx{x}) \cap \set { \norm {w-w_0}_{H^1((R,0), \psi \dx{x}) } = \norm {w}_{H^1((R,0), \psi \dx{x})} \leq \delta  }$
	\begin{align}
	J_{(R,0)}(w) \geq J_{(R,0)} (w_0) + \alpha \norm { w  }^2_{H^1((R,0), \psi \dx{x}))},
	\end{align}
	where
	$H^1_0((R,0), \psi \dx{x})) : = \set { w \in  H^1((R,0), \psi \dx{x})) : w(R)=0 } $.
\end{lem}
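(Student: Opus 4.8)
The plan is to show that $w_0\equiv 0$ is a strict local minimiser of $J_{(R,0)}$ with a quadratic coercivity gap, uniformly in $R$. Since $w_0\equiv 0$ on $(R,0)$ we have $w-w_0=w$ and $J_{(R,0)}(w_0)=J_{(R,0)}(0)$, so the assertion is equivalent to
\[
J_{(R,0)}(w)-J_{(R,0)}(0)=\int_R^0\bra{\tfrac12\abs{w'}^2+F(w)-F(0)}\psi(x)\dx{x}\ \geq\ \alpha\,\norm{w}_{H^1((R,0),\psi\dx{x})}^2 ,
\]
and the whole point is to prove this with a bound that does not depend on $R$.

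First I would Taylor expand the potential about $0$. Since $F'=-f$, conditions \eqref{F_2} and \eqref{F_3} give $F'(0)=-f(0)=0$ and $F''(0)=-f'(0)>0$, while $F'''=-f''$; the second-order Taylor formula with Lagrange remainder then yields
\[
F(w)-F(0)\ \geq\ \tfrac{-f'(0)}{2}\,w^2-\tfrac16\,\norm{f''}_{L^\infty}\,\abs{w}^3 .
\]
Feeding this in and using that the definition \eqref{Def:alpha} of $\alpha=\min\set{\tfrac14,\tfrac{-f'(0)}{4}}$ is chosen exactly so that $\tfrac12\abs{w'}^2+\tfrac{-f'(0)}{2}w^2\geq 2\alpha\bra{\abs{w'}^2+w^2}$, I would obtain
\[
J_{(R,0)}(w)-J_{(R,0)}(0)\ \geq\ 2\alpha\,\norm{w}_{H^1((R,0),\psi\dx{x})}^2-\tfrac16\,\norm{f''}_{L^\infty}\int_R^0\abs{w}^3\psi\dx{x},
\]
so that everything reduces to absorbing the cubic error into the quadratic gap.

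The hard part will be a weighted Sobolev bound for $\norm{w}_{L^\infty(R,0)}$ that is uniform in $R$ and in which the concentration quantity surfaces. Using $\int_R^0\abs{w}^3\psi\leq\norm{w}_{L^\infty(R,0)}\norm{w}_{H^1((R,0),\psi\dx{x})}^2$, I would split $(R,0)=(R,-x_0)\cup(-x_0,0)$. On $(R,-x_0)$ the drift vanishes and $\psi\equiv\psi(-x_0)$ is constant, so the one-sided condition $w(R)=0$, the identity $w^2(x)=2\int_R^x ww'\,ds$ and $2ab\leq a^2+b^2$ give $\norm{w}_{L^\infty(R,-x_0)}\leq\psi(-x_0)^{-1/2}\norm{w}_{H^1((R,0),\psi\dx{x})}$, with no dependence on the (unbounded) length of the interval, only on its $L^2$ mass. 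On $(-x_0,0)$ I would write $w(x)=w(-x_0)+\int_{-x_0}^x w'$ and apply Cauchy--Schwarz against the weight $1/\psi$; since \eqref{Def:psi} gives $\psi(s)^{-1}=\psi(-x_0)^{-1}\exp\bra{\int_{-x_0}^s k}$, this reproduces exactly
\[
\int_{-x_0}^0\frac{\dx{s}}{\psi(s)}=\frac{1}{\psi(-x_0)}\int_{-x_0}^0\exp\bra{\int_{-x_0}^t k(s)\dx{s}}\dx{t},
\]
the concentration integral of \eqref{blocking_condition}. Combining the two pieces gives $\norm{w}_{L^\infty(R,0)}\leq\psi(-x_0)^{-1/2}\bra{C_1+C_2\bra{\int_{-x_0}^0\exp(\int_{-x_0}^t k)\dx{t}}^{1/2}}\norm{w}_{H^1((R,0),\psi\dx{x})}$ for universal constants $C_1,C_2$, uniformly in $R$.

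To close, I would invoke $\norm{w}_{H^1((R,0),\psi\dx{x})}\leq\delta$ and the definition \eqref{Def:delta} of $\delta$, which is arranged precisely so that $\tfrac16\norm{f''}_{L^\infty}\norm{w}_{L^\infty(R,0)}\leq\alpha$; the cubic term is then dominated by $\alpha\norm{w}_{H^1((R,0),\psi\dx{x})}^2$ and the coercivity estimate follows. The numerical constants $2$ and $14$ in \eqref{Def:delta} would enter only as (non-optimal) upper bounds for the universal Sobolev constants $C_1,C_2$, consistent with the remark that $14$ is a technical, non-optimal constant. I expect the genuine obstacle to be exactly the uniform-in-$R$ weighted $L^\infty$ estimate — controlling the unbounded constant-weight region as $R\to-\infty$ without any loss, and isolating the concentration integral cleanly on the support of $k$ — since the remainder is a routine Taylor expansion.
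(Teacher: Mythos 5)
Your proof is correct, and its skeleton (Taylor expansion of $F$ about $0$, the quadratic coercivity with constant $2\alpha=\min\{\tfrac12,-\tfrac{f'(0)}{2}\}$, then absorbing the cubic remainder using $\norm{w}_{H^1}\leq\delta$) is exactly the paper's. Where you genuinely diverge is in the key technical step, the uniform-in-$R$ control of $\int_R^0\abs{w}^3\psi$. You bound $\norm{w}_{L^\infty(R,0)}$ directly: on $(R,-x_0)$ via $w(R)=0$, $w^2=2\int ww'$ and the constancy of $\psi$ there, and on $(-x_0,0)$ via $w(x)=w(-x_0)+\int_{-x_0}^x w'$ with Cauchy--Schwarz against $1/\psi$, which is where the concentration integral appears. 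The paper instead proves an $L^4$ bound on $(R,-x_0)$ by the fundamental theorem of calculus, converts $\abs{z}^3\leq b\abs{z}^2+b^{-1}\abs{z}^4$ with an optimised $b$, and controls $w(-x_0)$ through a cutoff-function trace estimate on $(-x_0-1,-x_0)$ — this is precisely where the constants $2$ and $14$ in \eqref{Def:delta} originate. Your route is more elementary, avoids the cutoff and the Young splitting, and yields a larger admissible radius (effectively replacing $2+\max\{14,\cdot\}$ by $1+(\cdot)^{1/2}$, with an extra factor $\tfrac16$ from the Lagrange remainder); since the paper's $\delta$ is smaller than what your estimate requires, the lemma as stated still follows. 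The trade-off is purely in the constants: both arguments isolate the same concentration quantity $\int_{-x_0}^0\exp(\int_{-x_0}^t k)\dx{t}$, and the paper's remark already concedes that $14$ is not optimal.
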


\begin{proof}[Proof of the Lemma]
	First by a Taylor expansion of $F$ we find that 
	
	\begin{align} \label{Taylor_expansion_of_functional}
	J_{(R,0) }(w) = \int \limits_R^ 0 \bra {  \frac{1}{2}  \abs {w'}^2 +F(0) +F'(0) w + \frac{1}{2} F''(0) w^2 + \eta(w) w^3     }  \psi \dx{x} ,
	\end{align}
	where $ \abs {\eta(w)}  \leq \norm {f''}_{L^\infty}   $.
	We can rewrite this as 
	\begin{align}
	J_{(R,0)}(w) = J_{(R,0)} (w_0) + \int \limits_R^ 0 \bra { \frac{1}{2} \abs {w'}^2 - \frac{1}{2} f'(0) w^2 + \eta(w) w^3    } \psi .
	\end{align}
	It is immediate that
	\begin{align}
	\int \limits_R^ 0 \bra { \frac{1}{2} \abs {w'}^2 -      \frac{1}{2} f'(0) w^2    } \psi \geq \min \set { \frac{1}{2} , -\frac{f'(0)}{2}   }  \norm {w}^2_{H^1((R,0), \psi \dx{x})} .
	\end{align}
	It remains to absorb the last term in the Taylor expansion into this. 
	In order to do so we will use that 
	\begin{align}
	\norm { w  }^3_{L^3((R,0), \psi \dx{x})} \leq \sigma \bra {\psi,k,x_0 , \norm { w  }_{H^1((R,0), \psi \dx{x})}  } \norm { w  }^2_{H^1((R,0), \psi \dx{x})} ,
	\end{align}
	where $\sigma$ is independent of $R$ and $\sigma \rightarrow 0$ as $ \norm { w  }_{H^1((R,0), \psi \dx{x})} \rightarrow 0$.

	First of all on $(R,-x_0)$ in order to avoid any domain-dependency of the embedding constant (since we want to decrease $R$ later), we use only the fundamental theorem of calculus:
	\begin{align} \label{L4_norm_on_0_R}
	\begin{split}
	\norm {w }^4_{L^4((R,-x_0), \psi \dx{x})} &=	\int \limits_R^{-x_0} w^4 \psi \leq \int \limits_R^{-x_0} w^2 \psi \norm {w^2}_{L^\infty(R,-x_0)} \\
	&\leq \norm {w }^2_{L^2((R,-x_0), \psi \dx{x})} \int \limits_R^{-x_0} \abs {  (w^2)' }  \\
	&\leq \norm {w }^2_{L^2((R,-x_0), \psi \dx{x})}  \frac{1}{\min_{[R,-x_0]} \psi} \int \limits_R^{-x_0} \abs {  (w^2)' } \psi  \\
	&\leq  \frac{1}{\psi(-x_0)} \norm {w }^4_{H^1(R,-x_0), \psi \dx{x})}.
	\end{split}
	\end{align}

	For the Sobolev embedding on $(-x_0,0)$ we need a trace estimation and therefore we need a cut off function
	\begin{align}
	\zeta \in C_c^1(\R) \text{ s.t. } \zeta =1 \text{ in } \bra {-x_0 -\tfrac{1}{4},-x_0 +\tfrac{1}{4}   } \text{ and } \zeta = 0 \text{ in } \R \setminus \bra {-x_0-\tfrac{1}{2} , -x_0 +\tfrac{1}{2}} .
	\end{align}
	Note that $\zeta$ can be chosen such that $\norm {\zeta}_{C^1(\R)} \in \pra {1,7}$. This will be assumed in the following.
	
	Then we have
	\begin{align} \label{trace_estimate_w_at_0}
	\begin{split}
	w(-x_0) &= w(-x_0) \zeta(-x_0) - w(-x_0-1) \zeta(-x_0-1) = \int \limits_{-x_0-1}^{-x_0} (w \zeta )' \\
	&= \int \limits_{-x_0-1}^{-x_0} w' \zeta +w \zeta' 
	= \int \limits_{-x_0-1}^{-x_0} w' \sqrt{\psi} \frac{1}{\sqrt{\psi}} \zeta + \int \limits_{-x_0-1}^{-x_0} w \sqrt{\psi} \frac{1}{\sqrt{\psi}} \zeta'\\
	 &\leq \norm {\zeta}_{L^\infty(\R)} \norm { \frac{1}{\sqrt{\psi}}  }_{L^2(-x_0-1,x_0)} \norm {w'}_{L^2((-x_0-1,-x_0), \psi \dx{x})} \\
	 &\quad +\norm {\zeta'}_{L^\infty(\R)} \norm { \frac{1}{\sqrt{\psi}}  }_{L^2(-x_0-1,x_0)} \norm {w}_{L^2((-x_0-1,-x_0), \psi \dx{x})} \\
	 &\leq 2 \norm {\zeta}_{C^1(\R)}  \norm { \frac{1}{\sqrt{\psi}}  }_{L^2(-x_0-1,x_0)} \norm {w}_{H^1((-x_0-1,-x_0), \psi \dx{x})} \\
	 &\leq 14  \norm { \frac{1}{\sqrt{\psi}}  }_{L^2(-x_0-1,x_0)} \norm {w}_{H^1((-x_0-1,-x_0), \psi \dx{x})} 
	\end{split}
	\end{align}
	(The same holds for $-w(0)$).
	With this straight forward estimation we are in the position to do our final embedding in $(-x_0,0)$:

\begin{align}
\int \limits_{-x_0}^0 \abs {w}^3 \psi &\leq \int \limits_{-x_0}^0 \abs {w}^2 \psi \norm {w}_{L^\infty} \leq \int \limits_{-x_0}^0 \abs {w}^2 \psi  \bra { \abs { w(-x_0) } + \int \limits_{-x_0}^0 \abs {w'} } \\
&\leq\int \limits_{-x_0}^0 \abs {w}^2 \psi  \bra { 14 \norm { \frac{1}{\sqrt{\psi}}  }_{L^2(-x_0-1,-x_0)} \norm  {w}_{H^1((-x_0-1,-x_0), \psi \dx{x})}  + \int \limits_{-x_0}^0 \abs {w'}  \sqrt{\psi} \frac{1}{\sqrt{\psi}} } \\
&\leq \norm  {w}^2_{H^1((-x_0,0), \psi \dx{x})}  \left (14 \norm { \frac{1}{\sqrt{\psi}}  }_{L^2(-x_0-1,-x_0)}  \norm  {w}_{H^1((-x_0-1,-x_0), \psi \dx{x})} \right . \\
&\quad + \left . \norm  {w}_{H^1((-x_0,0), \psi \dx{x})}  \norm { \frac{1}{\sqrt{\psi}}  }_{L^2(-x_0,0)} \right ) \\
&\leq \norm  {w}^3_{H^1((-x_0-1,0), \psi \dx{x})}   \max \set { 14 \norm { \frac{1}{\sqrt{\psi}}  }_{L^2(-x_0-1,-x_0)} ,  \norm { \frac{1}{\sqrt{\psi}}  }_{L^2(-x_0,0)}    }  \\
&= \norm  {w}^3_{H^1((-x_0-1,0), \psi \dx{x})} \frac{1}{\sqrt{\psi(-x_0)}} \max \set { 14, \bra { \int \limits_{-x_0}^0 \exp \bra { \int \limits_{-x_0}^t k(s) \dx{s} } \dx{t}   }^\frac{1}{2}      }  
\end{align}
To put all the estimates together we use that for all $b>0$ (arbitrary but fixed) it holds that for all $z \in \R$
\begin{align}
\abs {z}^3 \leq b \abs {z}^2 + \frac{1}{b} \abs {z}^4 .
\end{align}
Hence it follows that for all $b >0$
\begin{align}
\int \limits_{R}^{-x_0} \abs {w}^3 \psi \leq b \int \limits_{R}^{-x_0} \abs {w}^2 \psi + \frac{1}{b} \int \limits_{R}^{-x_0} \abs {w}^4 \psi .
\end{align}
Combining everything we have
\begin{align}
\int \limits_{R}^{0} \abs {w}^3 \psi &= \int \limits_{R}^{-x_0} \abs {w}^3 \psi + \int \limits_{-x_0}^0 \abs {w}^3 \psi \\
&\leq \bra { b \int \limits_{R}^{-x_0} \abs {w}^2 \psi  + \frac{1}{b} \int \limits_{R}^{-x_0} \abs {w}^4 \psi   } + \int \limits_{-x_0}^0 \abs {w}^3 \psi \\
&\leq \bra { b \norm {w}^2_{H^1((R,-x_0), \psi \dx{x})} + \frac{1}{b} \frac{1}{\psi(-x_0)}  \norm {w}^4_{H^1((R,-x_0),\psi \dx{x})}         } \\
&\quad + \frac{1}{\sqrt{\psi(-x_0)}}  \max \set { 14, \bra { \int \limits_{-x_0}^0 \exp \bra { \int \limits_{-x_0}^t k(s) \dx{s} } \dx{t}   }^\frac{1}{2}      }  \norm  {w}^3_{H^1((-x_0-1,0), \psi \dx{x})} \\
&\leq \norm {w}^2_{H^1((R,0),\psi \dx{x})} \left (b + \frac{\norm {w}^2_{H^1((R,0),\psi \dx{x})} }{b \psi(-x_0)} \right .\\
&\quad  \left .+  \frac{1}{\sqrt{\psi(-x_0)}}  \max \set { 14, \bra { \int \limits_{-x_0}^0 \exp \bra { \int \limits_{-x_0}^t k(s) \dx{s} } \dx{t}   }^\frac{1}{2}      }    \norm  {w}_{H^1((-x_0-1,0), \psi \dx{x})}  \right )
\end{align}
Now we choose $b>0$ optimally as
\begin{align}
b:= \frac{\norm {w}_{H^1((R,0),\psi \dx{x})}}{\sqrt{\psi(-x_0)}}  .
\end{align}
It follows then directly that 
\begin{align}
	\int \limits_R^0 \abs {w}^3 \psi \leq \frac{\alpha}{\norm {f''}_{L^\infty}} \norm {w}^2_{H^1((R,0),\psi \dx{x})}
\end{align}
if 
\begin{align}
	\norm {w}_{H^1((R,0),\psi \dx{x})} \leq \frac{\alpha \sqrt{\psi(-x_0)}}{\norm {f''}_{L^\infty} \bra {2 + \max \set { 14, \bra { \int \limits_{-x_0}^0 \exp \bra { \int \limits_{-x_0}^t k(s) \dx{s} } \dx{t}   }^\frac{1}{2}      }}} =: \delta .
\end{align}
But this is exactly the (explicit) choice of $\delta$ and the Lemma is proven.
\end{proof}

With the help of this Lemma we are now in the position to prove the Proposition \ref{proposition:blocking:fixed_R}.
\begin{proof}[Proof of Proposition \ref{proposition:blocking:fixed_R}]
	The strategy of this proof is to show that for all $w \in H^1_{0,1}((R,a), \psi \dx{x})$ such that \begin{align}
	\norm {w-w_0}_{H^1((R,a),\psi \dx{x})} = \delta
	\end{align}
	 it holds that
	\begin{align}
	J_{(R,a)}(w) > J_{(R,a)}(w_0).
	\end{align}
	Since  $J_{(R,a)}$ is weakly lower semicontinous, bounded from below and coercive, $J_{(R,a)}$ has a local minimizer   among all functions $w \in H^1_{0,1}((R,a), \psi \dx{x})$ such that $\norm {w-w_0}_{H^1((R,a),\psi \dx{x})} \leq \delta$. And since we have a local minimizer, that does not lie on the boundary $\norm {w-w_0}_{H^1((R,a),\psi \dx{x})} = \delta $, we can derive an Euler-Lagrange-equation.
	
	Let us first note that from $\norm {w-w_0}_{H^1((R,a), \psi \dx{x})} \leq \delta$ it follows that $\norm {w-w_0}_{H^1((R,0), \psi \dx{x})} \leq \delta$. To make use of Lemma \ref{lemma:auxiliary_lemma_fixed_R} we split the functional as follows. 
	For any $w \in H^1_{0,1}((R,a), \psi \dx{x})$ it holds that
	\begin{align}
	J_{(R,a)}(w) - J_{(R,a)}(w_0 ) = \underbrace{ J_{(R,0)}(w) - J_{(R,0)}(w_0)  }_{=: I}+  \underbrace{J_{(0,a)}(w) - J_{(0,a)}(w_0) }_{=: II}.
	\end{align}
	From Lemma \ref{lemma:auxiliary_lemma_fixed_R} it follows for the first term I 
	\begin{align}
	J_{(R,0)}(w) - J_{(R,0)}(w_0) \geq \alpha \norm {w}^2_{H^1((R,0), \psi \dx{x})} .
	\end{align}
	For the second term II we use the observation that there is $K >0$ such that for all $s\in \R$
	\begin{align}
	F(s) \geq K(s-1)^2 
	\end{align}
	(since $F(1)=0$, $f'(1)<0$, $\int_0^1 f >0$ and $f(s) <0$ for $s \in (0,\theta)$ and $f(s) >0$ for $s \in (\theta,1)$, $f(s)>0$ for $s \in (-\infty,0)$, $f(s)<0$ for $s \in (1,\infty)$ and $f$ grows linearly at infinity.)

	With this observation we can conclude that for any interval $A \subset (R,a)$ it holds that
	\begin{align} \label{J_generic_bound_from_below}
	J_A(w) \geq \nu \norm {w-1}^2_{H^1(A, \psi \dx{x})} ,
	\end{align}
	where $\nu := \min \set {K, \frac{1}{2}}$.
	Furthermore we can estimate 
	\begin{align}
	J_{(0,a)}(w_0) = \int \limits_0^{a} \bra { \frac{1}{2} \bra { \frac{1}{a} }^2 +F(w_0)   } \psi
	\leq \underbrace {  \bra { \frac{1}{2a} +a \max \limits_{s \in [0,1]} F(s)     }  }_{:= \beta}  \psi(0)
	\end{align}
	Together with \eqref{J_generic_bound_from_below} we get
	\begin{align}
	J_{(0,a)}(w) - J_{(0,a)}(w_0) \geq \nu \norm {w-1}^2_{H^1((0,a),\psi \dx{x})} - \beta \psi(0) .
	\end{align}
	In order to use the assumption that $\norm {w-w_0}_{H^1((R,a),\psi \dx{x})} = \delta$, we estimate that 
	\begin{align}
	\nu \norm {w-1}^2_{H^1((0,a), \psi \dx{x})} \geq \frac{\nu}{2} \norm {w-w_0}^2_{H^1((0,a), \psi \dx{x})} - \nu \norm {w_0-1}^2_{H^1((0,a ), \psi \dx{x})} 
	\end{align}
	using Young's inequality.

Furthermore we can explicitly calculate
\begin{align}
\norm {w_0-1}^2_{H^1((0,a), \psi \dx{x})} = \int \limits_0^a  \bra {   \bra  {\frac{1}{a}}^2 + \bra {  \frac{x}{a}-1}^2 } \psi 
=\psi(0)   \underbrace{ \bra {  \frac{1}{a} + \frac{1}{3} a    } }_{:= \gamma} .
\end{align}
Putting these estimations together we get
\begin{align}
J_{(R,a)}(w) - J_{(R,a)}(w_0) &\geq \frac{\nu}{2} \norm {w-w_0}^2_{H^1((0,a), \psi \dx{x})} - \nu \gamma \psi(0) - \beta \psi(0) + \alpha \norm {w-w_0}^2_{H^1((R,0), \psi \dx{x})} \\
&\geq \underbrace { \min \set { \frac{\nu}{2} , \alpha   }  }_{:= \eta}   \underbrace {  \norm { w-w_0  }^2_{H^1((R,a), \psi \dx{x})}  }_{= \delta^2(x_0, \psi,k,f )}- \nu \gamma \psi(0) - \beta \psi(0) .
\end{align}
So the proposition is proved if this is positive. This is the case if
\begin{align}
&\eta \delta^2(x_0, \psi,k,f) - (\nu \gamma +\beta) \psi(0) >0 \Leftrightarrow \\
&\frac{\alpha^2 \eta}{ \norm {f''}^2_{L^\infty}(\nu \gamma +\beta)} > \exp \bra { -\int_\R k  }   \bra {2 + \max \set { 14, \bra { \int \limits_{-x_0}^0 \exp \bra { \int \limits_{-x_0}^t k(s) \dx{s} } \dx{t}   }^\frac{1}{2}      }}^2
\end{align}
Here we have used that 
\begin{align}
	\frac{\psi(0)}{\psi(-x_0)} = \exp \bra { - \int_\R k   }
\end{align}
simply by definition of $\psi$ \eqref{Def:psi}.
This proves the Proposition.
Note that $\beta, \gamma$ only depend on $f$ and $a$ and $\beta, \gamma \rightarrow \infty$ as $a \rightarrow 0$ or $a \rightarrow +\infty$. Since they also depend continuously on $a$ there is an optimal (depending on $f$) choice of $a \in (0,\infty)$.
\end{proof} 

From Proposition \ref{proposition:blocking:fixed_R} we get for any $R<-x_0-1$ existence of a local minimizer $w_R \in H^1_{0,1}((R,a), \psi \dx{x})$ of the functional $J_{(R,a)}$ such that $\norm {w_R-w_0}_{H^1((R,a), \psi \dx{x})} \leq \delta$. From this it follows that $w_R$ is a weak solution of 
\begin{align}
\begin{cases}
-w_R'' + k w_R' = f(w_R) \text{ in } (R,a) \\
w_R(a) =1 \\
w_R(R) =0
\end{cases} .
\end{align} 
Using the maximum principle we conclude that for all $R<-x_0-1: 0 \leq w_R \leq 1$ in $(R,a)$.
From this we construct a supersolution to 
\begin{align}
\begin{cases}
\partial_t u -\partial_{xx} u +k(x) \partial_{x} u= f(u) \text{ for all } (t,x) \in \R\times \R \\
u(t,x) - \phi(x+ct) \rightarrow 0 \text{ as } t \rightarrow -\infty \text{ uniformly in } \R
\end{cases} 
\end{align} 
by passing to the limit $R \rightarrow -\infty$ and extending by $1$ onto $(a,\infty)$.

\begin{prop}[Existence of a stationary supersolution] \label{proposition:existence_stationary_supersolution}
	Assume that condition \eqref{blocking_condition} holds and let $w_R$ be the local minimizer of the energy functional $J_{(R,a)}$ as in Proposition \ref{proposition:blocking:fixed_R}, then $(w_R)_{R<-x_0-1}$ converges up to a subsequence in $C^2_\text{loc}((-\infty,a])$ to a solution $w_\infty$ of
	\begin{align} \label{w_infty_equation}
	\begin{cases}
	- w_\infty''+k w_\infty'  = f(w_\infty) \text{ in } (-\infty,a) \\
	w(a)=1
	\end{cases}
	\end{align}
	such that $w_\infty(x) \rightarrow 0$ as $x \rightarrow -\infty$.
\end{prop}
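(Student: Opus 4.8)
The plan is a compactness-and-limit argument whose one essential ingredient is that the $H^1$-bound supplied by Proposition \ref{proposition:blocking:fixed_R} is \emph{uniform in} $R$. This single fact will give both the compactness needed to extract $w_\infty$ and the decay at $-\infty$ that separates $w_\infty$ from the trivial solution $\equiv 1$.

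First I would collect uniform estimates. Since $\norm{w_R-w_0}_{H^1((R,a),\psi\dx{x})}\le\delta$ with $\delta$ independent of $R$, and $w_0$ has weighted $H^1$-norm bounded independently of $R$, one has $\norm{w_R}_{H^1((R,a),\psi\dx{x})}\le M$ for some $M=M(f,\psi,a)$. As $\psi$ is smooth and strictly positive it is bounded below on each compact interval, so for fixed $c<-x_0-1$ and all $R<c$ the unweighted bound $\norm{w_R}_{H^1((c,a))}\le C(c)$ holds; combined with $0\le w_R\le1$ (maximum principle) and the divergence-form equation $(\psi w_R')'=-\psi f(w_R)$, a routine one-dimensional bootstrap (bounded right-hand side $\Rightarrow$ $\psi w_R'$ locally Lipschitz $\Rightarrow$ $f(w_R)$ locally Lipschitz $\Rightarrow$ $\psi w_R'\in C^{1,1}_\loc$) yields $C^2_\loc((-\infty,a])$-bounds uniform in $R$, the fixed Dirichlet datum $w_R(a)=1$ providing the estimate up to the endpoint. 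Exhausting $(-\infty,a]$ by the intervals $[-n,a]$ and diagonalising Arzel\`a--Ascoli over $n$ then produces a single subsequence with $w_R\to w_\infty$ in $C^2_\loc((-\infty,a])$.

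Passing to the limit is then direct. The $C^2_\loc$-convergence lets me send $R\to-\infty$ pointwise in $-w_R''+kw_R'=f(w_R)$ and, using continuity of $f$, conclude that $w_\infty$ solves \eqref{w_infty_equation} classically on $(-\infty,a)$; convergence up to $a$ gives $w_\infty(a)=\lim_R w_R(a)=1$, and $0\le w_\infty\le1$ passes to the limit as well.

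The decisive step is the decay, and here the $R$-independence of $\delta$ is exactly what is needed. For fixed $c<0$ and $R<c$, since $w_0\equiv0$ on $(R,0)$ one has $\norm{w_R}_{H^1((c,0),\psi\dx{x})}\le\norm{w_R}_{H^1((R,0),\psi\dx{x})}=\norm{w_R-w_0}_{H^1((R,0),\psi\dx{x})}\le\delta$; letting $R\to-\infty$ (the weighted norm on the fixed interval $(c,0)$ converges by $C^1$-convergence) gives $\norm{w_\infty}_{H^1((c,0),\psi\dx{x})}\le\delta$ for every $c<0$, and then $c\to-\infty$ yields $\norm{w_\infty}_{H^1((-\infty,0),\psi\dx{x})}\le\delta$ by monotone convergence. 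Because $\supp k\subset[-x_0,0]$, equation \eqref{Def:psi} forces $\psi\equiv\psi(-x_0)>0$ on $(-\infty,-x_0]$, so on that half-line the weight is a positive constant and $w_\infty\in H^1((-\infty,-x_0))$ in the ordinary sense; a function in $H^1$ of a half-line necessarily tends to $0$ at the infinite end, whence $w_\infty(x)\to0$ as $x\to-\infty$. I expect the main obstacle to be purely technical: making sure the regularity bootstrap and the Arzel\`a--Ascoli extraction are genuinely uniform in $R$, so that the diagonal subsequence exists and the local weighted $H^1$-bounds can be promoted to a global bound on $(-\infty,0)$; once these uniform bounds are secured, solving the equation, fixing the value at $a$, and the vanishing at $-\infty$ all follow.
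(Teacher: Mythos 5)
Your argument is correct, and its skeleton coincides with the paper's: the whole proof hinges on the $R$-independence of $\delta$, which gives first compactness (the paper extracts the $C^2_\loc((-\infty,a])$-convergent subsequence from $0\le w_R\le 1$ plus Schauder estimates, where you instead run the explicit one-dimensional bootstrap on $(\psi w_R')'=-\psi f(w_R)$ --- both work) and then the uniform bound $\norm{w_\infty-w_0}_{H^1((-\infty,a),\psi\dx{x})}\le\delta$ in the limit (the paper via Fatou, you via exhaustion by fixed intervals and monotone convergence --- again equivalent). The only genuine divergence is the final decay step. The paper argues by contradiction: if $w_\infty(x_n)>\eta$ along $x_n\to-\infty$, the uniform gradient bound forces $w_\infty\ge\eta/2$ on disjoint intervals of fixed length, making $\norm{w_\infty}_{L^2((-\infty,0),\psi\dx{x})}$ infinite. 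You instead observe that $\psi$ is constant on $(-\infty,-x_0]$ (since $\supp k\subset[-x_0,0]$), so $w_\infty$ lies in the ordinary $H^1$ of a half-line and therefore tends to $0$ at $-\infty$ by the standard fact that $w_\infty^2$ has a limit (as $2w_\infty w_\infty'\in L^1$) which must vanish by integrability. Your route is slightly more direct and avoids invoking the $C^2$ bound at the last step, at the cost of using that $k$ has compact support so the weight really is constant far to the left; the paper's contradiction argument would survive in settings where $\psi$ is merely bounded above and below near $-\infty$. Both are complete proofs.
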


\begin{proof}
	As $0\leq w_R \leq 1$ for all $R>-x_0-1$ and using Schauder estimates there exists a subsequence $(R_n)_{n \in \N}$ with $R_n \searrow -\infty$ as $n \rightarrow \infty$ such that $w_{R_n} \rightarrow w_\infty$ in $C^2_\text{loc}((-\infty,a])$ as $n \rightarrow \infty$.
	It remains to prove that the limit $w_\infty$ satisfies $w_\infty \rightarrow 0$ as $x \rightarrow -\infty$. By Fatou's Lemma we find
	\begin{align} \label{Fatou_w_infty_w_0_bound}
	\norm {w_0-w_\infty}^2_{H^1((-\infty,a), \psi \dx{x})} \leq \liminf \limits_{n \rightarrow \infty} \norm {(w_0-w_{R_n})  \chi_{\set {R_n<x<a}}   }^2_{H^1((-\infty,a), \psi \dx{x})} \leq \delta^2.
	\end{align}
	
	Then arguing by contradiction, we assume that there exists $\eta >0$ and a sequence $(x_n)_{n \in \N}$, such that $x_n \rightarrow -\infty$ as $n \rightarrow \infty$ and $w_\infty(x_n) >\eta$ for all $n \in \N$. Since $w_\infty \in C^2_\text{loc}$ and $w_\infty$ is a bounded solution of \eqref{w_infty_equation}, by standard elliptic estimates we know that $\abs { w'_\infty} \leq K$ for some constant $K>0$. It follows that for all $x \in \bra { x_n -\frac{\eta}{2K} , x_n + \frac{\eta}{2K} }$
	\begin{align}
	\abs {w_\infty(x) - w_\infty(x_n)} \leq \max \limits_{\pra { x_n -\frac{\eta}{2K} , x_n + \frac{\eta}{2K} }  } \abs {w'_\infty} \abs {x-x_n}
	\end{align}
	Hence $w_\infty(x) \geq \frac{\eta}{2}$ for all $x \in \bra { x_n -\frac{\eta}{2K} , x_n + \frac{\eta}{2K} }$ and all $n \in \N$. Assuming without loss of generality that for all $n \in \N$ it holds that $x_{n+1} < x_n -\frac{K}{\eta}$, we obtain that
	\begin{align}
	\norm {w_\infty -w_0}^2_{L^2((-\infty,a), \psi \dx{x})} \geq \psi(-x_0) \sum \limits_{n =M}^\infty \bra {\frac{\eta}{2} }^2 \frac{\eta}{K} = \infty,
	\end{align}
	where $M \in \N$ is such that $x_n <-x_0$ for all $n >M$. But this is a contradiction to \eqref{Fatou_w_infty_w_0_bound} and thereby the Proposition is proved.
\end{proof}

The proof of Theorem \ref{theorem:blocking} is now nothing but applying Proposition \ref{proposition:existence_stationary_supersolution} and a comparison principle.

\begin{proof}[Proof of Theorem \ref{theorem:blocking}]

	Let us now take $w_\infty$ as in Proposition \ref{proposition:existence_stationary_supersolution} and let us  extend $w_\infty$ by $1$ onto all of $\R$. We set
	\begin{align}
	\tilde{w}_\infty(x) := 
	\begin{cases}
	w_\infty(x) &, \text{ if } x \leq a \\
	1 &, \text{ else}.
	\end{cases}
	\end{align}
	Thus $\tilde{w}_\infty(x) $ is a supersolution of the parabolic problem 
	\begin{align}
	\partial_t u - \partial_{xx} u + k(x) \partial_{x} u  = f(u) &\text{ for } (t,x) \in \R \times \R , 
	\end{align}
	Furthermore it holds that
	\begin{align}
	\lim \limits_{t \rightarrow -\infty} \inf \limits_{x \in \R}  \bra {  \tilde{w}_\infty(x) - \phi(x+ct) } \geq 0.
	\end{align}
	Indeed 
	\begin{align}
	&\text{ for } x \geq a, \text{ for all } t \in \R:  \tilde{w}_\infty(x) - \phi(x+ct) = 1- \phi(x+ct) \geq 0 \\
	&\text{ for } x < a, \text{ for all } t \in \R:  \tilde{w}_\infty(x) - \phi(x+ct) \geq \tilde{w}_\infty(x) - \phi(a+ct) \rightarrow \tilde{w}_\infty(x) \geq 0 \\
	&\qquad \text{ uniformly in } (-\infty,a) \text{ as } t \rightarrow -\infty.
	\end{align}
	Using the generalized maximum principle (Lemma 3.2 in \cite{front_blocking}), we conclude that
	\begin{align}
	u(t,x) \leq \tilde{w}_\infty(x) \text{ for all } t \in \R \text{ and } x \in \R.
	\end{align} 
	Hence the stationary supersolution $\tilde{w}_\infty$ blocks the invasion of the stationary state $1$ into the left.
	
\end{proof}

\section{A sufficient condition for the wave passing the drift disturbance with asymptotically at most a shift} \label{section:existence/propagation}

The objective of this section is to give a sufficient condition for `mostly unchanged' propagation as it is given in Theorem \ref{main_theorem}.

The strategy is the well-known one of \cite{FifeMcLeod}, i.e. constructing suitable super-and subsolutions that will ensure invasion of the front to the left and imply suitable a priori estimates for a Lyapunov-function argument.
To do so first of all we need to make sure that we get full invation by bounding $u$ from below against a slightly disturbed traveling wave in Lemma \ref{Lemma:apriori_lower_estimate}. Then we make sure that besides the drift-disturbance the solution $u$ looks approximately like a traveling wave even after the disturbance. Therefore we show that for all large times $u$ is small for $x$ sufficiently negative. (see Lemma \ref{lemma:u_small_x_1_small}).
Having established this we construct a priori estimates from above and below as in Lemma \ref{Lemma_estimation_above} and establish the stability Lemma \ref{lemma_stability} that we need for our Lyapunov function argument.

\begin{lem}[estimation from below / full invasion] \label{Lemma:apriori_lower_estimate}
	Under the assumptions of Theorem \ref{main_theorem} there exists a time $T^-\in \R$ and constants $\beta^-, C^-, \omega^- >0$ such that 
	\begin{align}
	\max \{	\phi(x+ct - \beta^-) -C^- e^{-\omega^- t},0 \} \leq u(t,x)
	\end{align}
	for $x \in \R$ and $t \geq T^-$ .
\end{lem}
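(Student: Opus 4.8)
The plan is to construct a subsolution of \eqref{DiffEqu} that is built from a slightly perturbed traveling wave, and then invoke a comparison principle together with the known behaviour of $u$ as $t \to -\infty$. The natural ansatz is the classical one from \cite{FifeMcLeod}: set
\begin{align}
\ubar u(t,x) := \phi\bra{x + ct + \xi(t)} - q(t),
\end{align}
where $\xi(t)$ is a shift to be determined and $q(t) = C^- e^{-\omega^- t}$ is an exponentially decaying correction. The idea is that, away from the drift support $[-x_0,0]$, the function $\phi(x+ct+\xi)$ already solves the unperturbed equation exactly, so the only obstruction to $\ubar u$ being a subsolution is (a) the bistable structure of $f$ near the stable states $0$ and $1$, which $-q(t)$ is designed to control via $f'(0),f'(1)<0$, and (b) the extra drift term $k(x)\partial_x \ubar u = k(x)\phi'$, which is supported in the compact set $[-x_0,0]$ and whose size is controlled by $k^+ \le C(f,x_0)$.

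First I would compute the defect $N[\ubar u] := \partial_t \ubar u - \partial_{xx}\ubar u + k\,\partial_x \ubar u - f(\ubar u)$ and show it is $\le 0$. Using that $\phi$ solves \eqref{ODE_travelling_wave}, the terms not involving $k$, $q$, $\xi'$ collapse, leaving
\begin{align}
N[\ubar u] = \dot\xi(t)\,\phi' - \dot q(t) + f\bra{\phi} - f\bra{\phi - q} + k(x)\,\phi'.
\end{align}
The standard Fife–McLeod estimate exploits that for $\phi$ near $0$ or $1$ the difference $f(\phi)-f(\phi-q)$ has a favourable sign coming from $f'(0),f'(1)<0$, while in the intermediate region $\phi' $ is bounded below. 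Choosing $q(t)=C^- e^{-\omega^- t}$ with $\omega^-$ small relative to $\min\set{-f'(0),-f'(1)}$ and $C^-$ suitably large handles these two regimes as in \cite{FifeMcLeod,change_of_speed_1}; the shift is taken constant ($\dot\xi=0$, i.e. $\xi\equiv -\beta^-$) or slowly varying. The genuinely new term is $k(x)\phi'$: since $\supp k\subset[-x_0,0]$ is fixed and $\phi'$ is bounded, one has $\abs{k(x)\phi'}\le k^+\,\norm{\phi'}_{L^\infty}$, and this is the term that the smallness hypothesis \eqref{condition_theorem_propagation} is meant to absorb into the strictly negative margin produced by the $f(\phi)-f(\phi-q)$ and $\dot q$ terms on the support of $k$. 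Here one must be careful that on $[-x_0,0]$ the wave $\phi$ need not be near $0$ or $1$, so the margin there comes instead from the lower bound on $\phi'$ on the (compact, $x_0$-dependent) set where $x+ct$ ranges over a bounded window intersecting $\supp k$; this is precisely why the constant is allowed to depend on $x_0$.

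The main obstacle is making the drift term compatible with the sign conditions \emph{simultaneously} across all three regions. Where $\phi$ is close to a stable state the quantity $f(\phi)-f(\phi-q)+\dot q/\ldots$ is favourable but $\phi'$ (multiplying $k$) is exponentially small, so $k\phi'$ is negligible; where $\phi'$ is of order one the drift term is largest, but there $x+ct$ lies in a bounded set and one can extract a uniform positive lower bound for the Fife–McLeod margin, so that $k^+\le C(f,x_0)$ forces $N[\ubar u]\le 0$. Quantifying this trade-off — i.e. exhibiting the explicit constant $C(f,x_0)$ — is the delicate bookkeeping step. Once $\ubar u$ is a subsolution, I would check the initial/asymptotic ordering: as $t\to-\infty$ the hypothesis in \eqref{DiffEqu} gives $u(t,x)-\phi(x+ct)\to 0$ uniformly, and since $\ubar u(t,x)\le \phi(x+ct+\xi)\le \phi(x+ct)+o(1)$ for an appropriate choice of the constant shift $\beta^-$ and of $T^-$, we obtain $\ubar u(T^-,\cdot)\le u(T^-,\cdot)$. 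The comparison principle for \eqref{DiffEqu} then propagates this ordering forward, yielding $\phi(x+ct-\beta^-)-C^- e^{-\omega^- t}\le u(t,x)$ for $t\ge T^-$; taking the positive part (since $u\ge 0$) gives the stated inequality.
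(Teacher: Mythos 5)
Your proposal follows the same route as the paper: a Fife--McLeod type subsolution $\phi(x+ct+\xi(t))-q(t)$, the three-region case distinction, and the parabolic comparison principle seeded by the initial condition of \eqref{DiffEqu} at a time $t_\epsilon$ where $\norm{u(t_\epsilon,\cdot)-\phi(\cdot+ct_\epsilon)}_{L^\infty}\le\epsilon$. Two points in your sketch need correction before the argument closes. First, the shift cannot be taken constant: with $\dot\xi=0$ the defect in the middle region $\xi\in[-A^-,A^-]$ is $-\dot q+f(\phi)-f(\phi-q)+k\phi'\le \omega^- q+\norm{f'}_{L^\infty}q+k\phi'$, which is strictly positive there even for $k\equiv 0$. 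The margin in that region is \emph{only} generated by a genuinely time-dependent shift with $\dot\xi(t)\le -\frac{\norm{f'}_{L^\infty}+2\omega^-}{\delta^-}\,q(t)$, so that $\dot\xi\phi'\le\dot\xi\delta^-$ absorbs the bad terms; in the paper this is the function $V^-(t)$ of \eqref{def:V-}, which converges to the finite limit $\beta^-$ precisely because $q$ is integrable. Attributing the middle-region margin to ``the lower bound on $\phi'$'' alone misses that this lower bound must multiply a negative $\dot\xi$.

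Second, your primary bound $\abs{k(x)\phi'}\le k^+\norm{\phi'}_{L^\infty}$ cannot be absorbed in the outer region $\xi>A^-$: there the margin is $\abs{f'(1)+2\omega^-}\,q(t)\sim e^{-\omega^- t}$, while a constant bound on the drift term eventually exceeds any exponentially decaying quantity, no matter how small $k^+$ is. You do remark that $\phi'$ is exponentially small near the stable states, but the quantitative statement that makes the proof work is the one the paper uses: on $\supp k\subset[-x_0,0]$ one has $\xi^-\ge -x_0+ct-m$, hence
\begin{align}
k(x)\,\phi'(\xi^-)\;\le\; k^+ C_\phi\, e^{\mu(x_0+m)}\,e^{-\mu c t},
\end{align}
and since $\omega^-\le \tfrac{\mu c}{2}<\mu c$ this forcing decays strictly faster than the margin, so it suffices to impose the condition at the single time $t_\epsilon$ --- which is exactly where the factor $e^{-\mu c t_\epsilon}$ in \eqref{condition_k+} comes from. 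The paper packages this cleanly by feeding the forcing $k^+C_\phi e^{-\mu(-x_0-m+ct)}$ into the linear ODE \eqref{def:v-} defining $v^-$, so that one corrector handles all three regions at once; your region-by-region absorption can be made to work, but only after replacing the crude $L^\infty$ bound by the moving-frame decay above.
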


\begin{proof}
	From the initial condition in \eqref{DiffEqu} we know that for every $\epsilon >0 $ there is $t_\epsilon \in \R$ such that
	\begin{align}
	\norm { u(t_\epsilon, \cdot) - \phi((\cdot)+ct_\epsilon)  }_{L^\infty(\R)} \leq \epsilon .
	\end{align}
Note that $t_\epsilon$ only depends on $f$ and $\epsilon$ since
\begin{align}
	0 \leftarrow w^-(t,x) -\phi(x+ct) \leq u(t,x) -\phi(x+ct) \leq w^+(t,x)- \phi(x+ct) \rightarrow 0 
\end{align}
as $t \rightarrow -\infty$ uniformly in $\R$.
Since $w^ -$ and $w^ +$ (see section \ref{section:existence_and_uniqueness}) depend only on $f$ it is clear that $t_\epsilon$  depends only on $f$ and $\epsilon$ as claimed.

It is also known (cf. \cite{FifeMcLeod}) that there is $C_\phi>0$ and $\mu >0$ such that
\begin{align} \label{estimate:phi_prime}
0 < \phi'(z) < C_\phi e^{-\mu |z|} \text{ for all } z \in \R.
\end{align}
	Let us set furthermore
\begin{align} \label{def:omega-}
\omega^- := \min \left ( \frac{|f'(0)|}{4}, \frac{|f'(1)|}{4}, \frac{c \mu}{2}  , 1      \right ) 
\end{align}
and choose $\rho >0$ such that
\begin{align}
\begin{cases}
|f'(s) -f'(0)| \leq \omega^-  &\text{ for all } s \in [0,\rho] \\
|f'(s) -f'(1)| \leq \omega^- &\text{ for all } s \in [1-\rho,1]
\end{cases} \quad .
\end{align}
Let $A^->0$ be such that 
\begin{align}
\begin{cases}
\phi(z) \geq 1-\frac{\rho}{2} &\text{ for all } z \geq A^-, \\
\phi(z) \leq \frac{\rho}{2} &\text{ for all } z \leq -A^- .
\end{cases}
\end{align}
Since $\phi'$ is positive and continuous on $\R$ we have
\begin{align}
\delta^- &:= \min \limits_{z \in [-A^-, A^-]} \phi'(z) >0. \label{def:delta-} 
\end{align}
We choose $\epsilon =\frac{\rho}{4}$ and set 
\begin{align}
	m := \frac{\norm {f'}_{L^ \infty}+2 \omega^-}{\delta^-} \bra { \frac{\epsilon}{\omega^- } + \frac{C_\phi e^{\mu x_0} e^{-\mu ct_\epsilon}  }{\mu c \omega^- }    } >0.	
\end{align}
Let us define the following auxiliary functions $v^-$ by
\begin{align} \label{def:v-}
	\dot{v}^-(t) &= -\omega^- v^-(t) +k^+ C_\phi  e^{-\mu (-x_0-m+ct)} \text{ for all } t >t_\epsilon \text{ and } \\
	v^-(t_\epsilon) &= \epsilon
\end{align}
as well as
	\begin{align} \label{def:V-}
V^-(t) &:=  \frac{\|f'\|_{L^\infty}+2\omega^-}{\delta^-}   \int \limits_{t_\epsilon}^t v^-(\tau) \dx{\tau} .
\end{align}
This implies that
\begin{align}
v^-(t) = \bra {  \epsilon + \underbrace{ \frac{C_\phi k^+ e^{\mu (x_0+m)} e^{-\mu ct_\epsilon}}{c \mu - \omega^-}  }_{=: C_{v^-}  } } e^{-\omega^- (t-t_\epsilon)    } - \frac{C_\phi k^+ e^{\mu (x_0+m)} e^{-\mu ct_\epsilon}}{c \mu - \omega^-}  e^{-\mu c(t-t_\epsilon)}
\end{align}
It directly follows that
\begin{align}
	v^-(t) &\geq \epsilon e^{-\omega^-(t-t_\epsilon)} >0 \quad \text{ (from $\omega^- < c\mu $) and } \\
	v^-(t) &\leq \epsilon + \frac{k^+ C_\phi}{\mu c} e^{\mu(x_0+m)}  e^{-\mu ct_\epsilon} \quad \text{(from $v^- >0$ and \eqref{def:v-})}
\end{align}
Under the condition 
\begin{align} \label{condition_k+}
k^ + < \min \set { \frac{\rho}{4} \frac{\mu c}{C_\phi} e^ {-\mu (x_0 +m)} e^{-\mu ct_\epsilon} , e^{-\mu m} }
\end{align}
it holds that
\begin{align} \label{bounds_on_v-_and_V-}
0<	v^-(t) \leq \frac{\rho}{2} \text{ and } 0 \leq V^-(t) \leq V^-(\infty) \leq m \quad \text{ for all } t \geq t_\epsilon .
\end{align}
(Note that \eqref{condition_k+} can be written in the form of condition \eqref{condition_theorem_propagation} since the right hand side in \eqref{condition_k+} depends only on $f$ and $x_0$. Recall that $\epsilon$ does only depend on $f$ and $t_\epsilon$ does only depend on $f$ and $\epsilon$.) We can construct a subsolution as follows.
Set
\begin{align}
	u^-(t) := \phi(\xi^-(x,t)) -v^-(t) \quad \text{ for all } t \geq t_\epsilon \text{ and } x \in \R,
\end{align}
where $\xi^-(t,x) := x+ct -V^-(t)$ are perturbed moving-frame coordinates.
In order to show that $u^-$ is a subsolution, we need to prove that 
\begin{align}
	\sL u^-:= \partial_t u^- -\partial_{xx} u^- +k(x) \partial_{x} u^- -f(u^-) \leq 0 \quad \text{ for all } t > t_\epsilon, x \in \R .
\end{align}
Since $u \geq 0$ and $0$ is a trivial subsolution of $\sL$ and maxima of subsolutions are again subsolutions, it sufices to show that $\sL u^- \leq 0$ on $\set { u^- >0   }$.

From the definition of $u^-$ and the auxiliary functions it follows directly
\begin{align}
	\sL u^- &= \bra { c- \dot{V}^-(t)   } \phi'(\xi^-) -\dot{v}^-(t) - \phi''(\xi^-) +k(x) \phi'(\xi^-) -f  \bra { \phi(\xi^-) -v^-(t)  } \\
	&= - \dot{V}^-(t) \phi'(\xi^-) -\dot{v}^-(t)  +k(x) \phi'(\xi^-)  +f \bra {\phi(\xi^-)}  -f  \bra { \phi(\xi^-) -v^-(t)  } \\
	&\leq -\dot{V}^-(t) \phi'(\xi^-) -\dot{v}^-(t) +k^+ C_\phi e^{ -\mu (-x_0-m +ct)  }  +f \bra {\phi(\xi^-)}  -f  \bra { \phi(\xi^-) -v^-(t)  } \\
	&=  -\dot{V}^-(t)  \phi'(\xi^-) + \omega^- v^-(t) + f \bra { \phi(\xi^-)  } -f \bra { \phi(\xi^-) -v^-(t)   }    ,
\end{align} 
we have used  \eqref{estimate:phi_prime}, \eqref{def:v-}, \eqref{def:V-}, \eqref{bounds_on_v-_and_V-}, and \eqref{ODE_travelling_wave}.
We now make the usual distinction between the cases $\xi^- <-A^-$, $\xi^- \in [-A^-, A^-]$ and $\xi^- > A^-$.

If $\xi^- <-A^-$ (using $\dot{V}^ - \geq 0$ and $\phi' >0$) we obtain that
\begin{align}
\sL u^- &\leq \omega^- v^-(t) + f'(\sigma) v^-(t) \leq \omega^- v^-(t) + \bra { f'(0) +\omega^-    } v^-(t) \\
&= \bra {  f'(0)  +2 \omega^-  } v^-(t)  \leq 0 ,
\end{align}
where $\sigma \in \bra { 0, \frac{\rho}{2}   } $ comes from the mean value theorem and the last estimate comes from the choice of $\omega^-$ in \eqref{def:omega-}. 

If $ \xi^- > A^-$ (using $\dot{V}^ - \geq 0$, $\phi'>0$ and in this case $ \phi( \xi^-) > 1- \frac{\rho}{2}$, $ \phi(\xi^-) -v^-(t) > 1-\frac{\rho}{2}  -\frac{\rho}{2} = 1-\rho   $) it holds that
\begin{align}
	\sL u^- &\leq \omega^- v^-(t) + f'(\tilde{\sigma}) v^-(t) \leq \omega^- v^-(t) + \bra { f'(1) +\omega^-       } v^-(t) \\
	&= \bra { f'(1) +2 \omega^-   } v^-(t) \leq 0,
\end{align}
where $\tilde{\sigma} \in \bra { 1-\rho, 1  }$ comes again from the mean value theorem and the last estimate follows from the choice of $\omega^-$.

If $\xi^- \in [ -A^-,A^-  ]$ we have that $ \phi' > \delta^-$ and $ \dot{V}^ - >0$,
\begin{align}
	\sL u^- &\leq -\dot{V}^-(t) \delta^- + \omega^- v^-(t) + \norm { f'  }_{L^\infty} v^-(t) \\ &
	= - \frac{\norm { f'  }_{ L^\infty  }  + 2 \omega^- }{\delta^-} \delta^- v^-(t) + \omega^- v^-(t) + \norm { f'  }_{L^\infty} v^-(t) = - \omega^- v^-(t) \leq 0 ,
\end{align} 
where we have used \eqref{def:delta-} and \eqref{def:V-}.

This was to be proven.

\end{proof}

Having this bound from below, that ensures that the wave fully invades the left, we just have to wait until only the tail, where the solution $u \approx 1$, lies in the support of $k$ and to ensure that $u$ is close to zero far to the left to get a similar bound from above against a slightly perturbed traveling wave.
For this we need the following lemma.

\begin{lem}[$u$ is small for $x$ small] \label{lemma:u_small_x_1_small}
Under the assumptions of Theorem \ref{main_theorem} for every $\epsilon >0$ there is $T^\epsilon \in \R$ such that for every $T>T^\epsilon$ there is $\zeta^\epsilon(T)<-x_0$ such that 
\begin{align}
	u(T,x) \leq \epsilon \quad \text{ for all } x \leq \zeta^\epsilon(T) .
\end{align}
\end{lem}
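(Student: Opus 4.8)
Lemma \ref{lemma:u_small_x_1_small} asserts that for every $\epsilon > 0$ there is a threshold time $T^\epsilon$ so that, past that time, the solution $u(T,\cdot)$ drops below $\epsilon$ on a far-left half-line $(-\infty,\zeta^\epsilon(T)]$. Intuitively, since the front moves to the right with positive speed (full invasion from the left is guaranteed by Lemma \ref{Lemma:apriori_lower_estimate}, and $u$ interpolates between $0$ at $-\infty$ and the invading state near the front), once the front has moved far enough to the right the left tail of $u$ should be uniformly small.

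Let me sketch the proof.

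**My proof plan.** The plan is to construct an explicit supersolution decaying to $0$ as $x \to -\infty$ and compare $u$ against it for large $t$. First I would recall that $u$ is an entire solution with $u(t,\cdot) - \phi(\cdot + ct) \to 0$ uniformly as $t \to -\infty$ (the initial condition in \eqref{DiffEqu}), so at any fixed reference time $u$ is globally close to a translate of the traveling wave profile $\phi$, and in particular $u(t,x)$ is small for $x$ very negative. The key structural fact is that on the region $x < -x_0$ the drift $k$ vanishes, so there equation \eqref{DiffEqu} reduces to the unperturbed bistable equation $\partial_t u - \partial_{xx} u = f(u)$. Since $f'(0)<0$ by \eqref{F_3}, near the stable state $0$ the linearization is strictly contractive. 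This suggests building a supersolution of the form $\bar u(t,x) = \phi(x + ct + \beta^+) + v^+(t)$ on the left region, where $v^+$ is an exponentially decaying correction handling the mismatch near the front, exactly mirroring the construction of $u^-$ in Lemma \ref{Lemma:apriori_lower_estimate}. The idea is: by that lemma the front genuinely propagates rightward, so for any fixed $x$ the quantity $\phi(x + ct + \beta^+)$ becomes small precisely when $x + ct$ is very negative, i.e. when $x$ is sufficiently to the left of the front position $-ct$.

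**Carrying it out.** Concretely, I would fix $\epsilon>0$. I would first produce a supersolution $u^+(t,x) = \phi(\xi^+(t,x)) + v^+(t)$ with $\xi^+(t,x) = x + ct + V^+(t)$, using a symmetric version of the auxiliary ODEs \eqref{def:v-}, \eqref{def:V-}; the smallness condition \eqref{condition_theorem_propagation} on $k^+$ guarantees $v^+$ stays small and bounded, so that $u(t,x) \le \phi(x + ct + \beta^+) + v^+(t)$ for all $t \ge t_\epsilon$ and $x \in \R$, where $v^+(t)$ decays and is eventually below $\epsilon/2$. Then for $t = T$ large enough (so that $v^+(T) \le \epsilon/2$, defining $T^\epsilon$), I would use the left tail behaviour of $\phi$: since $\phi(z) \to 0$ as $z \to -\infty$, there is $A_\epsilon$ with $\phi(z) \le \epsilon/2$ for $z \le -A_\epsilon$. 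Setting $\zeta^\epsilon(T) := -cT - \beta^+ - A_\epsilon$ (which is $<-x_0$ once $T$ is large since $c>0$) gives, for all $x \le \zeta^\epsilon(T)$,
\begin{align}
u(T,x) \le \phi(x + cT + \beta^+) + v^+(T) \le \tfrac{\epsilon}{2} + \tfrac{\epsilon}{2} = \epsilon,
\end{align}
which is the claim.

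**The main obstacle.** The genuinely delicate point is establishing the upper supersolution bound $u \le u^+$ globally in space and uniformly in the relevant time range, because near the disturbance $\supp k \subset [-x_0,0]$ the drift term $k(x)\partial_x u^+$ contributes a positive error of order $k^+ |\phi'|$, and this must be dominated by the stabilizing $\omega^+ v^+$ term in the same three-case ($\xi^+ < -A^+$, $|\xi^+| \le A^+$, $\xi^+ > A^+$) analysis used for $u^-$. This is where condition \eqref{condition_theorem_propagation} enters decisively: $k^+$ must be small enough (relative to $f$ and $x_0$) that the drift error is absorbed, exactly as the bound \eqref{condition_k+} was arranged in the previous lemma. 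Once that supersolution estimate is in hand the conclusion is a short computation using the exponential tail \eqref{estimate:phi_prime} of $\phi$ near $-\infty$; the real work is the comparison-principle construction, which I expect to parallel Lemma \ref{Lemma:apriori_lower_estimate} almost line for line with signs reversed.
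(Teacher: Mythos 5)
Your approach has a genuine gap --- two, in fact. First, the global upper bound $u(t,x)\le \phi(x+ct+\beta+V^+(t))+v^+(t)$ that you want to establish and then evaluate at large $T$ is essentially Lemma \ref{Lemma_estimation_above}; but in the paper that lemma is proved \emph{after} and \emph{by means of} the present one: to verify the initial comparison $u(T,\cdot)\le u^+(T,\cdot)$ for $x$ far to the left, where $u^+(T,x)\approx v^+(T)=\gamma$, one must already know that $u(T,x)\le\gamma$ there, which is precisely the statement you are trying to prove. Second, and more fundamentally, the claim that the supersolution construction ``parallels Lemma \ref{Lemma:apriori_lower_estimate} almost line for line with signs reversed'' under hypothesis \eqref{condition_theorem_propagation} is incorrect: for a supersolution one needs $\sL u^+\ge 0$, and the drift contributes $k(x)\phi'(\xi^+)$ with $\phi'>0$, so the dangerous contribution comes from the \emph{negative} part of $k$, whereas \eqref{condition_theorem_propagation} bounds only $k^+$. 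Nothing in the hypotheses makes $\min_\R k$ small, so during the time window in which the front of $\phi$ traverses $\supp k$ (where $\phi'(\xi^+)$ is of order one on the support) the term $k\phi'(\xi^+)$ can be as negative as $-\norm{k}_{L^\infty}\max\phi'$ and cannot be absorbed by $\omega^+ v^+$. The subsolution of Lemma \ref{Lemma:apriori_lower_estimate} gets away with only $k^+$ small precisely because there the harmful sign is the opposite one; the paper's Lemma \ref{Lemma_estimation_above} handles the supersolution not by smallness of $k$ but by starting at a time $T$ so late that the front has already passed $\supp k$ and $\phi'(\xi^+)\le C_\phi e^{\mu x_0}e^{-\mu c t}$ there.

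The paper's actual proof is entirely different and needs no condition on $k$ at all: it works only on the half-line $\set{x\le\zeta^\delta}$ with $\zeta^\delta<-x_0$, where $k\equiv 0$ and the equation is the unperturbed bistable equation. There one compares $u$ with a translate of the faster traveling wave $\phi^\delta$ (speed $c^\delta>c$) associated with a modified nonlinearity $f^\delta\ge f$ whose stable zeros are $\tfrac{\delta}{2}$ and $1+\tfrac{\delta}{2}$; the lateral boundary at $x=\zeta^\delta$ is handled by shifting $\phi^\delta$ so that it exceeds $1\ge u$ there for all $t\ge t^\delta$, and the initial comparison at $t=t^\delta$ holds because $u(t^\delta,x)\le\phi(x+ct^\delta)+\tfrac{\delta}{4}\le\tfrac{\delta}{2}\le\phi^\delta(x+ct^\delta+m^\delta)$ on that half-line. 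Since $\phi^\delta(-\infty)=\tfrac{\delta}{2}<\epsilon$, evaluating at $x\le\zeta^\epsilon(T)$ sufficiently negative gives the claim. To repair your argument you would need a one-sided comparison of this type rather than a global supersolution.
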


\begin{proof}
	The idea of the proof i.e. constructing supersolutions by slightly increasing the nonlinearity around the stable states $0$ and $1$ is taken from \cite{MatanoObst}.
	
	Choose $0<\delta< \min \set {\theta,\epsilon}$ arbitrarily ($\theta$ is as in \eqref{F_4}.). Extend $f$ linearly to a $C^{1,1}$ function on $[0,1+\delta]$.
	Let us now take $f^\delta \in C^ 2([0,1+\delta])$ such that $f^\delta  \bra {\frac{\delta}{2}} = 0 = f^\delta \bra {   1+ \frac{\delta}{2}   } $, $ {f^\delta}' \bra {\frac{\delta}{2}} <0, {f^\delta}' \bra {1+ \frac{\delta}{2}} <0 $ and $f^\delta \geq f$. 
		Then there is a unique $c^\delta >c>0$ and $\phi^\delta \in C^2(\R)$ (unique up to translation) such that
		\begin{align}
			{\phi^\delta}'' -c^\delta {\phi^\delta}' + f^\delta \bra {\phi^\delta} = 0 \text{ in } \R \\
			\text{ and } \frac{\delta}{2} = \phi^\delta(-\infty) < \phi^\delta < \phi^\delta(\infty) =1 + \frac{\delta}{2} \text{ and } {\phi^\delta}' >0 \text{ in } \R.
		\end{align}
		By the initial condition in \eqref{DiffEqu} there is $t^\delta \in \R$ such that 
		\begin{align} \label{auxiliary_lemma_delta:initial_condition1}
			\abs { \phi \bra {x+c t^\delta} -u \bra { t^\delta,x  }  } \leq \frac{\delta}{4} \text{ for all } x \in \R .  
		\end{align}
		Now we chose $\zeta^\delta <-x_0$ such that 
		\begin{align} \label{auxiliary_lemma_delta:initial_condition2}
			\phi \bra {x+c t^\delta} \leq \frac{\delta}{4} \text{ for all } x \leq \zeta^\delta.
		\end{align}
		Then there is $m^\delta \in \R$ such that 
		\begin{align}
			\phi^\delta \bra { \zeta^\delta +c^\delta t^\delta +m^\delta  } \geq 1.
		\end{align} 
		This implies that
		\begin{align} 
		\phi^\delta \bra { \zeta^\delta +c^\delta t +m^\delta     } \geq 1 \geq u \bra {t, \zeta^\delta} \text{ for all } t \geq t^\delta, 
		\end{align}
		since ${\phi^\delta}'>0$.
		From \eqref{auxiliary_lemma_delta:initial_condition1} and \eqref{auxiliary_lemma_delta:initial_condition2} it follows 
		\begin{align}
			u \bra { t^\delta, x  }  \leq \phi \bra {x +c t^\delta} + \frac{\delta}{4}  \leq \frac{\delta}{2} \leq \phi^\delta \bra {x +c t^\delta + m^\delta} \text{ for all } x \leq \zeta^\delta.
		\end{align}
		Furthermore $\phi^\delta \bra {x +c^\delta t + m^\delta    }$ solves
		\begin{align}
			\partial_t \phi^\delta - \partial_{xx} \phi^\delta = f^\delta \bra { \phi^\delta} &\text{ in } \R \times\set {x \leq \zeta^\delta }  , 
		\end{align}
		Hence from the parabolic comparison principle it follows that 
		\begin{align}
			u(t,x) \leq \phi^\delta \bra { x + c^\delta  t + m^\delta   } \text{ for all  } t \geq t^\delta \text{ and } x \leq \zeta^\delta .
		\end{align}
		From this we see that for every $T> t^\delta$ there is $\zeta^\epsilon(T) \leq \zeta^\delta$ such that
		\begin{align}
			u(T,x) \leq \phi^\delta \bra { x +c^\delta T + m^\delta   } \leq \epsilon \text{ for all } x \leq \zeta^\epsilon(T) .
		\end{align}
		This concludes the proof.
\end{proof}

With this Lemma we are now in the position to prove the following upper bound on the solution $u$.

\begin{lem}[estimation from above] \label{Lemma_estimation_above}
Under the assumptions of Theorem \ref{main_theorem} there is a time $T^+ >T^-$ and constants $\beta^+,C^+,\omega^+ >0$ such that 
\begin{align}
	\min \set { \phi \bra { x + c t + \beta^+      } +C^+ e ^{-\omega^+ t   }  ,1    } \geq u(t,x) \quad \text{ for all } x \in \R, t \geq T^+.
\end{align}
\end{lem}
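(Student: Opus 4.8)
The plan is to construct a supersolution that mirrors the subsolution of Lemma~\ref{Lemma:apriori_lower_estimate}, i.e.\ to seek a perturbed travelling wave of the form
\begin{align}
	u^+(t,x) := \phi\bra{\xi^+(t,x)} + v^+(t), \qquad \xi^+(t,x) := x + ct + V^+(t),
\end{align}
where $v^+(t)$ decays exponentially and $V^+(t)$ is an increasing phase shift converging to some finite $\beta^+$ as $t\to\infty$. The guiding principle is exactly symmetric to the lower estimate: the positive part $k^+$ of the drift produces an error term of size $k^+ C_\phi e^{-\mu|\xi^+|}$ when we insert $u^+$ into the operator $\sL$, and this must be absorbed by choosing $v^+$ to solve an ODE of the form $\dot v^+ = -\omega^+ v^+ + (\text{drift error})$ together with $V^+$ driven by $v^+$ through $\dot V^+ = \tfrac{\|f'\|_{L^\infty}+2\omega^+}{\delta^+}\,v^+$. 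First I would fix $\omega^+$ and a threshold $\rho$ exactly as in \eqref{def:omega-}, using $f'(0),f'(1)<0$ to guarantee the linearised reaction term is stabilising near both equilibria, and I would extract the Gaussian-type decay of $\phi'$ from \eqref{estimate:phi_prime}.

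The crucial new input, which was unavailable for the lower bound, is Lemma~\ref{lemma:u_small_x_1_small}: to compare $u$ with $u^+$ on all of $\R$ I must know that $u$ is genuinely small far to the left, since a supersolution built from a right-travelling front cannot dominate $u$ there unless $u$ itself has already become negligible. Concretely, I would pick a time $T^+ > T^-$ large enough that the front has fully invaded (so that by Lemma~\ref{Lemma:apriori_lower_estimate} the solution already looks like $\phi$ up to an exponentially small error), apply Lemma~\ref{lemma:u_small_x_1_small} to obtain a cut-off position $\zeta^\epsilon(T^+) < -x_0$ beyond which $u(T^+,\cdot) \le \epsilon$, and then choose the initial phase $\beta^+$ and amplitude $C^+$ so that $u(T^+,x) \le \phi(x + cT^+ + \beta^+) + C^+ e^{-\omega^+ T^+}$ holds on the whole line. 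On $\set{x \ge \zeta^\epsilon(T^+)}$ this is arranged by taking $\beta^+$ large, while on $\set{x < \zeta^\epsilon(T^+)}$ it follows from the smallness just established; the $\min\set{\,\cdot\,,1}$ is there because $1$ is itself a stationary supersolution and the minimum of two supersolutions is again a supersolution.

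The verification that $\sL u^+ \ge 0$ on the set $\set{u^+ < 1}$ then proceeds by the same three-region case distinction $\xi^+ < -A^+$, $\xi^+ \in [-A^+,A^+]$, $\xi^+ > A^+$ as in the proof of Lemma~\ref{Lemma:apriori_lower_estimate}, with every inequality reversed: in the outer regions the linearised reaction cooperates with $\omega^+ v^+$, and in the middle region the monotone phase shift $\dot V^+ \delta^+$ is tuned to dominate $(\|f'\|_{L^\infty}+2\omega^+)v^+$. The main obstacle I expect is not the reaction estimate but the bookkeeping of the drift error together with the requirement that $V^+(\infty)$ stay finite: the forcing term in the $v^+$-equation carries a factor $e^{-\mu c t}$ from the moving frame, so its time integral converges only because $\omega^+ < c\mu$ was built into \eqref{def:omega-}, and I must check that the resulting bound on $V^+(\infty)$ is consistent with the smallness condition \eqref{condition_theorem_propagation} on $k^+$. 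Matching the initial data for $u^+$ at $t = T^+$ uniformly in $x$ — reconciling the large-$\beta^+$ requirement on the right with the Lemma~\ref{lemma:u_small_x_1_small} bound on the left — is the delicate point, and it is precisely here that the restriction of the support of $k$ to $[-x_0,0]$ and the dependence of $C(f,x_0)$ on $x_0$ enter.
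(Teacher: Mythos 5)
Your construction is essentially the paper's: the same ansatz $u^+=\min\{\phi(x+ct+\beta+V^+(t))+v^+(t),1\}$, the same reliance on Lemma \ref{lemma:u_small_x_1_small} to match the initial data far to the left, a finite shift $\beta$ on the compact middle region where $u(T,\cdot)<1$, and the same three-region verification of $\sL u^+\ge 0$. One conceptual slip is worth correcting, though it does not break the argument: for the \emph{super}solution the dangerous contribution of the drift term $k(x)\phi'(\xi^+)$ comes from the \emph{negative} part of $k$ (it is $+k\phi'$ with $\phi'>0$ that must not push $\sL u^+$ below zero), so the error is controlled by $\norm{k}_{L^\infty}$, not by $k^+$; correspondingly, the paper needs \emph{no} smallness of $k$ in this lemma at all. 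The drift error is absorbed simply by taking $T^+$ so large that the support $[-x_0,0]$ of $k$ only meets the exponential tail of $\phi'$, yielding the forcing $\norm{k}_{L^\infty}C_\phi e^{\mu x_0}e^{-c\mu t}$, which is integrable since $\omega^+<c\mu$; the smallness condition \eqref{condition_theorem_propagation} on $k^+$ enters only through Lemma \ref{Lemma:apriori_lower_estimate}, whose role here is merely to supply the right-hand cutoff $\tilde\zeta_+$ where $\phi\ge 1-\gamma$. Also note that on the unbounded right region the matching at $t=T^+$ is achieved by the additive term $v^+(T)=\gamma$ (so that $\phi+\gamma\ge 1\ge u$), not by taking $\beta^+$ large, which only works on the compact set $D_\gamma$.
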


\begin{proof}
	The proof is similar to the one of Lemma 4.4 in \cite{change_of_speed_1}.	
	We choose 
	\begin{align} \label{def:omega+}
		\omega^+ := \min \set { \frac{|f'(0)|}{4} ,  \frac{|f'(1)|}{4} , \frac{\mu c}{2} ,1   } ,
	\end{align}
	where $\mu$ is as in \eqref{estimate:phi_prime}.
	Let us choose $\rho >0$ such that 
	\begin{align}
		\begin{cases}
		\abs { f'(s) -f'(0)   } < \omega^+ &\quad \text{ for all } s \in [0,\rho] \\
		\abs { f'(s) -f'(1)   } < \omega^+ &\quad \text{ for all } s \in [1-\rho,1] .
		\end{cases}
	\end{align}
	Let $A^+ >0$ be such that 
	\begin{align} \label{def:A+}
		\begin{cases}
		\phi(z) \geq 1- \frac{\rho}{2} &\quad \text{ for all } z \geq A^+ \\
		\phi(z) \leq \frac{\rho}{2} &\quad \text{ for all } z \leq -A^+ .
		\end{cases}
	\end{align}
	Let $\gamma = \frac{\rho}{4}$ be arbitrary and let $T>0$ be large enough such that 
	\begin{align}
	T>\max \set {T^-,T^\gamma,0} ,
		A^+ -cT \leq -x_0, T \geq \frac{x_0}{c} \text{ and } \frac{\norm {k}_{L^\infty} C_\phi e^{ \mu x_0 }  }{c \mu} e^{-c\mu T} < \frac{\rho}{4} .
	\end{align}We understand $T^-$ and $T^\delta$ as in Lemmata \ref{Lemma:apriori_lower_estimate} and \ref{lemma:u_small_x_1_small}.
	Let  $\tilde{\zeta}_- = \zeta^\gamma(T) < -x_0 $ be such that 
	\begin{align}
		u(T,x) \leq \gamma \quad \text{ for all } x \leq \tilde{\zeta}_- .
	\end{align}
	Here we understand $\zeta^\gamma(T)$ as in Lemma \ref{lemma:u_small_x_1_small}.
	Let furthermore $\tilde{\zeta}_+>0$ be  such that 
	\begin{align}
		\phi \bra { x +cT -\beta^-   } \geq 1-\gamma  \text{ and hence also } \\
		\phi \bra { x +cT    } \geq 1 -\gamma \text{ for all } x \geq \tilde{\zeta}_+ .
	\end{align}
	Let us set $D_\gamma := \set { x \in \R : \tzeta_- \leq x \leq \tzeta_+  }$.
	Since $\lim \limits_{z \rightarrow \infty} \phi(z)=1$ and $\max \limits_{x \in \overline{D_\gamma}} u(T,x) <1$, there is $\beta >0$ such that
	\begin{align} \label{definition:beta}
		\phi (x +cT + \beta) \geq u(T,x) \quad \text{ in } D_\gamma.
	\end{align} 
	Finally let us define
	\begin{align}
	 \delta^+ := \min \limits_{z \in [-A^+,A^+]} \phi'(z)>0 ,C_{v^+} := \norm {k}_{L^\infty} C_\phi e^{\mu x_0}  >0
	 \end{align}
	  and for all $t \geq T$:
	\begin{align}
		v^+(t) &:= \bra { \gamma + \frac{C_{v^+} e^{-c \mu T}}{c \mu - \omega^+}    } e^{-\omega^+(t-T)} - \frac{C_{v^+} e^{-c \mu T}}{c \mu - \omega^+}  e^{-c \mu (t-T)}, \\
		V^+(t) &:= \frac{\norm {f'}_{L^\infty} + \omega^+}{\delta^+} \int \limits_T^t v^+(\tau ) \dx{\tau} \geq 0. \label{def:V+}
	\end{align}
	For later reference let us note that
	\begin{align} \label{properties_v_+}
	\begin{split}
		0< \gamma e^{-\omega^+ (t-T)} \leq v^+(t) \leq \frac{\rho}{2} \text{ for all } t \geq T , v^+(T) = \gamma \text{ and } \\
		\dot{v}^+(t) = - \omega^+ v^+(t) + C_{v^+} e^{-c \mu t} \text{ for all } t \geq T.
		\end{split}
	\end{align}
	Now we have everything in place to define our candidate for the supersolution
	\begin{align}
		u^+(t,x) := \min \set { \phi(\xi^+) + v^+(t) , 1     } ,
	\end{align}
	where $\xi^+(t,x) := x +ct + \beta +V^+(t)$. 
	For the initial time $T$ we have that either $x \in D \setminus D_\gamma$ and then 
	\begin{align}
		u(T,x) \leq \min \set { \phi(x+cT ) + \gamma, 1   } = \min \set { \phi(x +cT) +v^+(T),1   } \leq u^+(T,x)
	\end{align}
	or we are in the case $x \in D_\gamma$ then by choice of $\beta >0$ in \eqref{definition:beta} it holds that
	\begin{align}
		u(T,x) \leq \phi(x +cT +\beta) = \phi(\xi^+(T,x)) \leq u^+(T,x) .
	\end{align}
	It remains to show that $u^+$ is indeed a supersolution of the operator $\sL$, i.e that
	\begin{align}
		\sL u^+ = \partial_t u^+ - \partial_{xx} u^+ +k(x) \partial_{x} u^+ -f(u^+) \geq 0 \text{ in } \R \text{ for all } t \geq T.
	\end{align}
	We can estimate this as
	\begin{align}
		\sL u^+ &= \dot{\xi}^+ \phi'(\xi^+) + \dot{v}^+(t) -\phi''(\xi^+) +k(x) \phi'(\xi^+) -f(u^+) \\
		&= \dot{V}^+(t) \phi'(\xi^+) + \dot{v}^+(t) +k(x) \phi'(\xi^+) +f(\phi(\xi^+)) -f(\phi(\xi^+)+v^+(t)) \\
		&\geq 
		\begin{cases}
		\dot{V}^+(t) \phi'(\xi^+) + \dot{v}^+(t)  +f(\phi(\xi^+)) -f(\phi(\xi^+)+v^+(t)) &, x <-x_0 \\
		\dot{V}^+(t) \phi'(\xi^+) + \dot{v}^+(t) -\norm {k}_{L^\infty} C_\phi e^{\mu x_0} e^{-c \mu t} +f(\phi(\xi^+)) -f(\phi(\xi^+)+v^+(t)) &, x \geq -x_0 .
		\end{cases}
	\end{align}
	Let us distinguish as in the proof of Lemma \ref{Lemma:apriori_lower_estimate} between the cases $\xi^+ > A^+$, $\xi^+ \in [-A^+,A^+]$, $\xi^+ <-A^+$.
	In the case $\xi^+ > A^+$ we have to distinguish between $x \geq -x_0$ and $x \leq -x_0$. 
	Using \eqref{properties_v_+} we get in the case $x \geq -x_0$
	\begin{align}
		\sL u^+ &\geq \dot{V}^+(t) \phi'(\xi^+) - \omega^+ v^+(t) + \norm {k}_{L^\infty} C_\phi e^{\mu x_0} e^{-c \mu t} -\norm {k}_{L^\infty} C_\phi e^{\mu x_0} e^{-c \mu t} -f'(\sigma) v^+(t) \\
		&\geq - (f'(\sigma) +\omega^+) v^+(t) \geq 0 ,
	\end{align}
	where $\sigma \in \bra { 1- \frac{\rho}{2}, 1  }$ comes from the mean value theorem and we have used the definition of $A^+$ and $\omega^+$ and that $\dot{V}^ + \geq 0$ and $\phi'>0$ (see \eqref{ODE_travelling_wave}, \eqref{def:omega+}, \eqref{def:A+}, \eqref{def:V+}).
	If $\xi^+ > A^+$ and $x < -x_0$ we can estimate 
	\begin{align}
		\sL u^+ &\geq \dot{V}^+(t) \phi'(\xi^+) -\omega^+ v^+(t) +C_{v^+} e^{-c\mu t} -f'(\sigma ) v^+(t) \\
		&\geq - (f'(\sigma )  + \omega^+) v^+(t) \geq 0, 
	\end{align}
	where again $\sigma \in \bra { 1- \frac{\rho}{2}, 1  }$ comes from the mean value theorem and we have used again the  definition of $A^+$ and $\omega^+$ and that $\dot{V}^ + \geq 0$ and $\phi'>0$.
	In the case $\xi^+ \in [-A^+,A^+]$ we are by choice of $T$ always in the portion of $\R$ where $x \leq -x_0$ and hence we can estimate
	\begin{align}
		\sL u^+ &\geq \dot{V}^+(t) \phi'(\xi^+) -\omega^+ v^+(t) +C_{v^+} - \norm {f'}_{L^\infty} v^+(t) \\
		&\geq \dot{V}^+(t) \delta^+ - (\omega^+ + \norm {f'}_{L^\infty}) v^+(t) \geq 0 
	\end{align}
	by choice of $V^+$.
	In the case $\xi^+ <-A^+$ we are by choice of $T$ always in $ \set {x \leq -x_0}$ and hence the estimation is done as in the case $\xi^+ > A+$ and $x \leq -x_0$.
	
	This concludes the proof.
	
\end{proof}

Having established these a priori estimates, the proof of the long time behaviour is classical and follows the lines of \cite{FifeMcLeod} (in the one dimensional case) or \cite{change_of_speed_1} (in higher dimensions).
First of all we derive the following global estimates on the derivatives of the solution.

\begin{lem}[stability] \label{lemma_stability}
	Let $u$ be a solution of \eqref{DiffEqu} that is at a time $t_0 >T^+$ already close to a traveling wave $\phi(x+ct+\beta)$ for some $\beta \in \R$ i.e. 
	\begin{align}
	|u(t_0,x) -\phi(x+ct_0 + \beta) | \leq \epsilon \text{ for all } x \in \R,
	\end{align}
	where $0 < \epsilon < \frac{\rho}{2}$. Then it holds for all $t \geq t_0$ and $x \in \R$ that
	\begin{align}
	|u(t,x)-\phi (x+ct+\beta)| \leq \delta(\epsilon,t_0) ,
	\end{align}
	where $\delta(\epsilon, t_0) \searrow 0$ as $\epsilon \searrow 0$ and $t_0 \nearrow +\infty$, $T^+$ and $\rho$ are as in the proof of Lemma \ref{Lemma_estimation_above}.
	
\end{lem}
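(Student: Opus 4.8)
The plan is to sandwich $u$ between a supersolution and a subsolution built from the \emph{same} wave $\phi(\cdot+ct+\beta)$, exactly in the spirit of the constructions in Lemmata~\ref{Lemma:apriori_lower_estimate} and~\ref{Lemma_estimation_above}, and then to read off $\delta(\epsilon,t_0)$ from how far these two barriers can drift apart. Concretely, I would take $\omega$, $\rho$ and $A$ as in the proof of Lemma~\ref{Lemma_estimation_above} (choosing the more restrictive constant where the one-sided constructions differ), set $\delta_0 := \min_{z\in[-A,A]}\phi'(z)>0$, and define
\[
\bar u(t,x) := \min\set{\phi(\xi^+)+q(t),\,1}, \qquad \underline u(t,x) := \max\set{\phi(\xi^-)-q(t),\,0},
\]
with perturbed moving-frame coordinates $\xi^\pm(t,x) := x+ct+\beta\pm Q(t)$. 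Here the decaying amplitude $q$ and the accumulated shift $Q$ solve the same type of ODEs as $v^\pm,V^\pm$ before, namely
\[
\dot q(t) = -\omega\, q(t) + C_\phi\norm{k}_{L^\infty}e^{\mu x_0}e^{-\mu c t}, \quad q(t_0)=\epsilon,
\]
\[
Q(t) := \frac{\norm{f'}_{L^\infty}+\omega}{\delta_0}\int_{t_0}^t q(\tau)\,\dx{\tau}\ \geq 0 .
\]

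The verification that $\sL\bar u\geq 0$ on $\set{\bar u<1}$ and $\sL\underline u\leq 0$ on $\set{\underline u>0}$ is the same three-region case distinction as in the previous two proofs: for $\abs{\xi^\pm}>A$ the linearisation of $f$ around the stable states $0$ and $1$, together with the choice of $\omega$, makes the amplitude term decay fast enough, while on the transition zone $\xi^\pm\in[-A,A]$ the shift term $\dot Q\,\phi'(\xi^\pm)\geq\dot Q\,\delta_0$ dominates by the definition of $Q$. The only new ingredient is the drift term $k(x)\phi'(\xi^\pm)$; since $\supp k\subset[-x_0,0]$ and $t\geq t_0>T^+$, on that support $\xi^\pm\geq -x_0+ct+\beta$ is large, so $\phi'(\xi^\pm)\leq C_\phi e^{-\mu(ct+\beta-x_0)}$ by~\eqref{estimate:phi_prime}, and the resulting $O(e^{-\mu c t})$ contribution is precisely what the forcing term $C_\phi\norm{k}_{L^\infty}e^{\mu x_0}e^{-\mu c t}$ in the $q$-equation is designed to absorb. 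At the initial time one has $Q(t_0)=0$ and $q(t_0)=\epsilon$, so the hypothesis $\abs{u(t_0,\cdot)-\phi(\cdot+ct_0+\beta)}\leq\epsilon$ gives $\underline u(t_0,\cdot)\leq u(t_0,\cdot)\leq\bar u(t_0,\cdot)$, and the parabolic comparison principle propagates this ordering to all $t\geq t_0$.

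Finally, from $\underline u\leq u\leq\bar u$ and the mean value theorem one obtains
\[
\abs{u(t,x)-\phi(x+ct+\beta)} \leq \norm{\phi'}_{L^\infty}Q(t)+q(t) \leq \norm{\phi'}_{L^\infty}Q(\infty)+\epsilon =: \delta(\epsilon,t_0),
\]
so it remains to see that $\delta(\epsilon,t_0)\searrow 0$ in the stated double limit. Solving the $q$-ODE explicitly (as for $v^+$ in Lemma~\ref{Lemma_estimation_above}) gives
\[
q(t)=\bra{\epsilon+c_1 e^{-\mu c t_0}}e^{-\omega(t-t_0)}-c_1 e^{-\mu c t_0}e^{-\mu c(t-t_0)}, \qquad c_1=\frac{C_\phi\norm{k}_{L^\infty}e^{\mu x_0}}{\mu c-\omega},
\]
whence $Q(\infty)=\frac{\norm{f'}_{L^\infty}+\omega}{\delta_0}\int_{t_0}^\infty q\leq C\bra{\epsilon+e^{-\mu c t_0}}$. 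I expect the main obstacle to be exactly this bookkeeping: one must check that the shift fed through the forcing term stays of order $\epsilon+e^{-\mu c t_0}$ rather than accumulating, so that the two barriers genuinely close up. The bistability sign conditions and the choice of $\omega$ guarantee that $\bar u,\underline u$ are true super- and subsolutions, but the quantitative $t_0$-dependence—reflecting the fact that for large $t_0$ the transition region of the wave has already moved far to the left of $\supp k$, making the drift-induced shift negligible—is what yields a $\delta(\epsilon,t_0)$ tending to $0$ as $\epsilon\searrow 0$ and $t_0\nearrow+\infty$ simultaneously, closing the argument.
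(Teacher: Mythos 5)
Your proposal is correct and follows essentially the same route as the paper: the paper's proof also re-runs the super/subsolution construction of Lemma~\ref{Lemma_estimation_above} with initial amplitude $\gamma=\epsilon$ at time $t_0$, obtains $v^+(t)\le \epsilon+2C(t_0)$ and $V^+(t)\le C(\epsilon+C(t_0))$ with $C(t_0)\searrow 0$ as $t_0\nearrow\infty$, and concludes via $\norm{\phi'}_{L^\infty}V^+(t)+v^+(t)$ together with the symmetric lower barrier. Your explicit ODEs for $q$ and $Q$ and the observation that the drift forcing is $O(e^{-\mu c t})$ on $\supp k$ are exactly the mechanism the paper relies on.
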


\begin{proof}
	The proof is as in \cite{change_of_speed_1} and shall only be repeated for the sake of self-contain\-ment.
	Note that unlike in the stability result in \cite{FifeMcLeod}, it is not sufficient for the solution of \eqref{DiffEqu} to once be close to a traveling wave in order to remain as close indefinitely. The reason is that the tail of the wave will always lie in a region where $k \neq 0$ and will therefore introduce a disturbance that enters in the form of a possible shift. But since this possible shift is integrable, we can make sure that we do not get driven too far from $\phi(x+ct + \beta) $ if we start late enough and thereby do not accumulate too much of the disturbance. 

Let us now turn to the formalities of the proof. It consists of revisiting Lemma \ref{Lemma_estimation_above}. We take $\gamma$ in the proof of Lemma \ref{Lemma_estimation_above} equal to $\epsilon$. If $t_0$ is large enough such that only the tail of $\phi(x+c+\beta)$ lies right of $x=-x_0$, i.e. $T<t_0$, we know that
\begin{align}
u(t,x) \leq \phi(x+c t + \beta + V^+(t)) + v^+(t)
\end{align}	
where 
\begin{align}
v^+(t) = (\epsilon-C(t_0)) e^{-\omega^+(t-t_0)} + C(t_0) e^{-\mu c(t-t_0)} \leq \epsilon +2 C(t_0)
\end{align}
and $C(t_0) \searrow 0$ as $t_0 \nearrow + \infty$. This implies that
\begin{align}
V^+(t) = C \int \limits_{t_0}^t v^+(\tau) \dx{\tau} \leq C \left (  \frac{\epsilon}{\omega^+} +C(t_0) \left ( \frac{1}{\omega^+} + \frac{1}{\mu c}   \right )   \right) = C (\epsilon+ C(t_0)).
\end{align}
Therefore we know that 
\begin{align}
u(t,x) - \phi(x+c+\beta) &\leq \phi(x+c t +\beta +V^+(t)) - \phi(x+c t+\beta) +v^+(t)  \\
&\leq \| \phi'   \|_{L^\infty} V^+(t) + v^+(t) = C(\epsilon + C(t_0)).
\end{align}
Along the same lines of the proof of Lemma \ref{Lemma_estimation_above} we can get symmetric estimates from below.
So we get that
\begin{align}
| u(t,x) - \phi(x+c t + \beta)  | \leq C ( \epsilon + C(t_0)) .
\end{align}
Since $C(t_0) \searrow 0$ for $t_0 \nearrow + \infty$ this was to be proven.

\end{proof}

From here on it will be more convenient to work in moving frame coordinates $(z,y)$ where $z = x +ct$. In the new coordinates $u$ solves
\begin{align}
\partial_t u + (c+k(z-ct)) \partial_z u - \partial_{zz} u = f(u) \quad &\text{ in } \R \times \R, 
\end{align}

\begin{lem} \label{apriori_bounds_u}
	There is $\sigma >0$ with $\sigma > \frac{|c|}{2}$, $\omega >0$ and $C>0$ such that
	\begin{align}
	|1-u|, |\partial_{z} u|, |\partial_{zz} u|, |\partial_t u | < C (e^{(\frac{1}{2}c-\sigma)z} + e^{-\omega t}) &\quad ,z>0 \\
	|u|, |\partial_{z} u|, |\partial_{zz} u|, |\partial_t u | < C (e^{(\frac{1}{2}c+\sigma)z} + e^{-\omega t}) &\quad ,z<0 
	\end{align}
	(where we always have omitted the arguments $(t,z)$.)
\end{lem}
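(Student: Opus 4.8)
The plan is to combine the two-sided a priori bounds from Lemmata~\ref{Lemma:apriori_lower_estimate} and~\ref{Lemma_estimation_above} with the standard exponential decay of the traveling wave profile~\eqref{estimate:phi_prime}, and then to bootstrap from the zeroth-order bound on $|1-u|$ (respectively $|u|$) to the derivative bounds via interior parabolic (Schauder) estimates. First I would translate the statements of the two estimation lemmata into the moving-frame variable $z=x+ct$. From Lemma~\ref{Lemma_estimation_above} we have $u(t,x)\leq \min\set{\phi(x+ct+\beta^+)+C^+e^{-\omega^+ t},1}$, and from Lemma~\ref{Lemma:apriori_lower_estimate} we have $\phi(x+ct-\beta^-)-C^-e^{-\omega^- t}\leq u(t,x)$. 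Subtracting $1$ and using $1-\phi(z)\leq C_\phi' e^{-\mu z}$ for $z>0$ (which follows by integrating~\eqref{estimate:phi_prime} from $z$ to $\infty$, since $\phi(\infty)=1$), the upper bound gives, for $z>0$,
\begin{align}
0\leq 1-u(t,x)\leq 1-\phi(z-\beta^-)+C^-e^{-\omega^- t}\leq C_\phi' e^{\mu\beta^-}e^{-\mu z}+C^-e^{-\omega^- t},
\end{align}
and symmetrically for $z<0$ the lower/upper bounds together with $\phi(z)\leq C_\phi'e^{\mu z}$ for $z<0$ yield $|u(t,x)|\leq C_\phi'e^{\mu\beta^+}e^{\mu z}+C^+e^{-\omega^+ t}$. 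Setting $\sigma:=\mu+\tfrac{|c|}{2}$ so that $\tfrac12 c-\sigma=-\mu$ and $\tfrac12 c+\sigma=\mu+c$, and taking $\omega:=\min\set{\omega^-,\omega^+}$, one checks that the exponents match the claimed form $e^{(\frac12 c-\sigma)z}$ for $z>0$ and $e^{(\frac12 c+\sigma)z}$ for $z<0$ (possibly after enlarging $\sigma$ and shrinking $\mu$; note $\sigma>\tfrac{|c|}{2}$ is automatic). This establishes the zeroth-order bounds $|1-u|$ and $|u|$.

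Next I would upgrade to the derivative bounds by a rescaling-plus-Schauder argument, which is the standard device for turning a pointwise decay estimate on a solution of a uniformly parabolic equation into decay estimates on its derivatives. Fix a point $(t_\ast,z_\ast)$ with, say, $z_\ast>0$, and consider the parabolic cylinder $Q:=(t_\ast-1,t_\ast]\times(z_\ast-1,z_\ast+1)$. On $Q$ the function $\tilde u:=1-u$ solves a uniformly parabolic equation with smooth bounded coefficients (the drift $c+k(z-ct)$ is bounded in $L^\infty$, and $f$ is $C^2$), and interior parabolic Schauder estimates bound $\norm{\partial_z\tilde u}_{L^\infty(Q/2)}+\norm{\partial_{zz}\tilde u}_{L^\infty(Q/2)}+\norm{\partial_t\tilde u}_{L^\infty(Q/2)}$ by a constant times $\norm{\tilde u}_{L^\infty(Q)}+\norm{f(u)}_{C^\alpha(Q)}$. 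Since $f$ is $C^2$ and $\tilde u$ is bounded, $f(u)$ inherits the same exponential decay as $\tilde u$ (because $f(1)=0$, so $|f(u)|\leq\norm{f'}_\infty|1-u|$), and therefore
\begin{align}
|\partial_z u(t_\ast,z_\ast)|+|\partial_{zz}u(t_\ast,z_\ast)|+|\partial_t u(t_\ast,z_\ast)|\leq C\,\sup_{Q}|1-u|\leq C'\,(e^{-\mu z_\ast}+e^{-\omega t_\ast}),
\end{align}
where the last step uses that on $Q$ the spatial variable ranges over an interval of fixed length, so $e^{-\mu z}$ and $e^{-\omega t}$ vary only by bounded multiplicative factors. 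The same argument applied to $u$ itself on cylinders centered at points with $z_\ast<0$ gives the corresponding bound with $e^{(\frac12 c+\sigma)z}$. Finally one absorbs all constants into a single $C$ and enlarges $\sigma$ if necessary to accommodate the factor $e^{\mu\beta^\pm}$ and the cylinder-induced constants.

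The main obstacle is organizational rather than deep: one must be careful that the Schauder constant in the interior estimate is \emph{uniform} in the base point $(t_\ast,z_\ast)$, which requires that the coefficients of the linearized operator have $C^\alpha$ norms bounded independently of the cylinder location. This holds here because $k\in C_c^\infty$ (so $k(z-ct)$ and all its derivatives are globally bounded, uniformly in $t$ after noting the moving argument only translates the bump) and $f\in C^2$; thus the only $t$- and $z$-dependence of the coefficients is through this uniformly-bounded drift, and the constant is genuinely uniform. A secondary point to handle cleanly is the transition region near $z=0$ and the interaction of the two exponential regimes: for $t$ bounded the spatial exponentials already dominate, while for large $t$ the $e^{-\omega t}$ term controls the bound uniformly in $z$, so matching the two pieces at $z=0$ only costs a harmless constant. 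Once uniformity of the Schauder constant is secured, the rest is the routine bookkeeping of exponents described above.
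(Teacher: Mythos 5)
Your proposal follows essentially the same route as the paper: zeroth-order bounds on $|u|$ and $|1-u|$ obtained by sandwiching $u$ between the shifted profiles of Lemmata \ref{Lemma:apriori_lower_estimate} and \ref{Lemma_estimation_above} and using the exponential approach of $\phi$ to its limits, followed by interior parabolic Schauder estimates on unit cylinders (together with the observation that $f(u)$ inherits the decay because $f(0)=f(1)=0$) to upgrade to the derivative bounds. One point in your exponent bookkeeping does not go through as written: for $z<0$ you derive $|u|\le C e^{\mu z}+Ce^{-\omega t}$ from the crude bound $\phi'(z)<C_\phi e^{-\mu |z|}$ of \eqref{estimate:phi_prime} and then claim this matches $e^{(\frac{1}{2}c+\sigma)z}$ with $\sigma=\mu+\frac{|c|}{2}$, i.e.\ $e^{(\mu+c)z}$; but for $z<0$ and $c>0$ one has $e^{(\mu+c)z}<e^{\mu z}$, so the target bound is strictly \emph{stronger} than what you have, and neither enlarging $\sigma$ nor shrinking $\mu$ repairs this. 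Since the lemma insists on $\sigma>\frac{|c|}{2}$, the decay rate of $u$ as $z\to-\infty$ must exceed $c$, and a symmetric single-$\mu$ estimate cannot deliver that in general; you need the sharper rate coming from linearizing \eqref{ODE_travelling_wave} at the state $0$, namely $\lambda_-=\frac{1}{2}\bigl(c+\sqrt{c^2-4f'(0)}\bigr)>c$ (and correspondingly $\lambda_+=\frac{1}{2}\bigl(c-\sqrt{c^2-4f'(1)}\bigr)<0$ at $+\infty$), which is exactly what the paper's proof invokes and which makes $\sigma>\frac{|c|}{2}$ a consequence of $f'(0),f'(1)<0$ rather than an ``automatic'' fact. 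With that substitution the remainder of your argument, in particular the uniform-in-base-point Schauder step, is correct and coincides with the paper's proof.
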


\begin{proof}	
	We are following the proof of Lemma 4.3 in \cite{FifeMcLeod}.
	It is well known (and can be seen by linearizations around $1$ and $0$) that the wave-front $\phi$ approaches $1$ and $0$ exponentially.
	E.g. the linearisation around  $\phi =1$ shows that $\phi(z) \rightarrow 1$ for $z \rightarrow +\infty$ with approximately the rate
	\begin{align}
	\exp \left ( \frac{1}{2} \left ( c - \sqrt{c^2-4 f'(1)} \right )z \right)
	\end{align}
	For $ z \rightarrow - \infty$ one gets a similar result.
	Together with Lemmata \ref{Lemma:apriori_lower_estimate} and \ref{Lemma_estimation_above} and setting $\omega := \min \set {\omega^-,\omega^+}$ we find:
	\begin{align} \label{bound_zeroth_order}
	\begin{split}
	|u(t,z) | &\leq \phi(z+\beta^+) +C^+ e^{-\omega t}  \\
	&\leq C \left ( \exp \left ( \left ( \frac{1}{2} c + \sigma \right ) z \right ) + e^{-\omega t} \right ) \text{ for } z <0 \text{ and } \\
	|1- u(t,z) | &\leq 1 - \left (   \phi  ( z-\beta^- ) - C^- e^{-\omega t}         \right ) \\
	&\leq C \left ( \exp \left ( \left ( \frac{1}{2} c - \sigma \right ) z \right ) + e^{-\omega t} \right ) \text{ for } z >0
	\end{split}
	\end{align}
	
	Since $f$ is Lipschitz, there is $L>0$ such that 
	\begin{align}
	|f(u)| \leq L |u| \text{ and } |f(u) | \leq L |1-u| \text{ for } u \in [0,1].
	\end{align}
	This together with \eqref{bound_zeroth_order} implies
	\begin{align}
	|f(u(t,z))| \leq C \left (   \exp \left (    \frac{1}{2} c z - \sigma |z|                                       \right )      +e^{-\omega t}                                  \right ) .
	\end{align}
	For the higher order estimates we employ Schauder Theory (e.g \cite{friedman2008partial} Thm 5 Chap 3 and Thm 4 in Chap 7 for the a priori bound on the Hölder-norm of $f(u)$).  Hence it does also hold:
	\begin{align}
	|\partial_{z} u | , |\partial_{zz} u | , |\partial_t u| \leq C \left (   \exp \left (    \frac{1}{2} c z - \sigma |z|                                       \right )      +e^{-\omega t}                                  \right ) .
	\end{align}

\end{proof}

Now we have everything in place to proof Theorem \ref{main_theorem}. The proof resembles the respective one of Theorem 4.1 in \cite{change_of_speed_1} and will be slightly modified and added for the sake of completeness.

\begin{proof}[Proof of Theorem \ref{main_theorem}]
	
	For the identification of the limit equation in the moving frame we will use an analogue of a Lyapunov function argument given in \cite{FifeMcLeod}. Lyapunov functions are a well known and very helpful tool for investigating the long-term behaviour of parabolic partial differential equations.

	Let us define the Lyapunov function as 
	\begin{align}
	\cL[u](t) := \int \limits_\R e^{-c z} \left (  \frac{1}{2} | \partial_{z} u |^2 -F(u) +H(z) F(1)     \right ) \dx{z}  ,
	\end{align}
	where $F(s) := \int_0^s f(\sigma) \dx{\sigma}$ and $H$ is the heaviside-function.
	
	To ensure integrability in the definition of $\cL$ we cut $u$ off as follows
	\begin{align}
	w(t,z) &= u(t,z) &&\text{ for } |z| \leq m t, \\
	w(t,z) &= 0 &&\text{ for } z \leq -m t -1, \\
	w(t,z) &=1 &&\text{ for } z \geq m t +1,
	\end{align}
	for some $m >0$ to be specified later. And we assume $w$ to be smoothed out in a manner, such that Lemma \ref{apriori_bounds_u} still holds for $w$.	
	
	Employing Lemma \ref{apriori_bounds_u} we find that
	\begin{align}
	|\cL  [w]| \leq C  \int \limits_{-m t -1}^{m t +1} e^{-cz} \left ( e^{c z -2 \sigma |z| } + e^{-2\omega t} \right )  \dx{z}
	\end{align}
	which is uniformly bounded for all $t>0$ if  $m>0$ is chosen such that $c m -2\omega <0$. Let us choose $m$ such that $0<m < \frac{1}{2} \min \left \{ \frac{2 \omega}{c},c   \right \}$.
	
	Using integration by parts it follows
	\begin{align}
	\dot{\cL}[w](t) = - \int \limits_\R e^{-cz} \left (  -c \partial_z w + \partial_{zz} w +f(w)       \right ) \partial_t w  ~  \dx{z} .
	\end{align}
	
	Unfortunately, $w$ does not solve $\partial_t w = -c \partial_z w + \partial_{zz} w + f(w)$ and we do not get a sign for $\dot{\cL}$. This is why we try to control the error against
	\begin{align}
	Q[w] = \int \limits_\R e^{-c z}  \left (  \partial_ {zz} w -c \partial_z w +f(w)  \right )^2  \dx{z} ,
	\end{align}
	i.e.
	\begin{align} \label{error_derivative_Lyapunov_function}
	&\dot{\cL}[w](t) + Q[w](t) = \\ &- \int \limits_\R e^{-c z} \left (  - c \partial_z w + \partial_{zz} w +f(w)   \right ) \left (  \partial_t w - \partial_{zz} w + c \partial_z w -f(w)   \right )  \dx{z}
	\end{align}
	
	Note that for $|z| \leq mt$ $w$ solves
	\begin{align}
	\partial_t w - \partial_{zz} w +c \partial_z w -f(w) = -k(z-ct) \partial_z w
	\end{align}
	and that $ k(z-ct)=0$  if $ t\geq \frac{1+x_0}{c-m}$ in $\set {|z| \leq mt}$ .

	For $t \geq \frac{1+x_0}{c-m}$ the last factor in the integral in \eqref{error_derivative_Lyapunov_function} vanishes in $\set {|z| \leq mt}$ and for $  \set {|z| \in (mt,mt+1] }$ we can use the growth estimates from Lemma \ref{apriori_bounds_u}. With all that we can conclude that
	\begin{align}
	\lim \limits_{t \rightarrow \infty} | \dot{\cL}[w](t) +Q[w](t)| =0 .
	\end{align}
	Since $Q[w] \geq 0$ this implies that
	\begin{align}
	\limsup \limits_{t \rightarrow \infty} \dot{\cL}[w](t) \leq 0 .
	\end{align}
	Hence there must be a subsequence $(t_k)_{k \in \N}$, $t_k \rightarrow +\infty$ for $k \rightarrow \infty$ such that
	\begin{align}
	\lim \limits_{k \rightarrow \infty} \dot{\cL}[w](t_k) =0
	\end{align}
	because otherwise $\cL[w]$ could not be uniformly bounded in $t$.
	Therefore it must hold along that subsequence
	\begin{align} \label{limit_of_Q_along_sequence}
	\lim \limits_{k \rightarrow \infty} Q[w](t_k) =0 .
	\end{align}
	
	By Lemma \ref{apriori_bounds_u} and an Arzela-Ascoli argument for a further subsequence (again denoted by $(t_k)_{k \in \N}$) there is a function $u_\infty$ such that:
	\begin{align}
	u( t_k,\cdot) \rightarrow u_\infty(\cdot) \quad \text{ for } k \rightarrow \infty \text{ in } C^2(\R), \\
	w(t_k, \cdot ) \rightarrow u_\infty(\cdot) \quad \text{ for } k \rightarrow \infty \text{ in } C^2(\R) . 
	\end{align}
	Therefore  since $Q\geq 0$ and \eqref{limit_of_Q_along_sequence} for any finite interval $I \subset \R$:
	\begin{align}
 \left (  \int \limits_{I}  e^{-c z} \left ( \partial_{zz}w -c \partial_z w +f(w) \right )^2  \dx{z}   \right )(t_k) &\to 0  \quad \text{ and } \\
	 \left (  \int \limits_{I}  e^{-c z} \left ( \partial_{zz}w -c \partial_z w +f(w) \right )^2  \dx{z}   \right )(t_k) &\to \int \limits_{I} e^{-c z} \left (  \partial_{zz} u_\infty -c \partial_z u_\infty +f(u_\infty)  \right )^2  \dx{z} 
	\end{align}
	as $k \to \infty$.
Hence $u_\infty$ solves
	\begin{align}
	\partial_{zz} u_\infty -c \partial_z u_\infty + f(u_\infty) =0 \quad \text{ a.e. in } \R \text{ and } \\
	\lim \limits_{z \rightarrow -\infty} u_\infty(z) = 0 , \lim  \limits_{z \rightarrow \infty} u_\infty(z) =1
	\end{align}
	
	By uniqueness of traveling fronts up to translation in $z$ (see e.g. \cite{traveling_fronts_in_cylinders} Thm 7.1 or \cite{FifeMcLeod} Cor 2.3), there is $\beta \in \R$ such that
	\begin{align}
	u_\infty(z) = \phi(z+\beta) .
	\end{align}
	
	Now the stability result Lemma \ref{lemma_stability} implies that 
	\begin{align}
	u(t,z) \rightarrow \phi(z+ \beta) \text{ uniformly in } \R \text{ as } t \rightarrow +\infty ,
	\end{align}
	not only for the special subsequence $(t_n)_{n \in \N}$.
		It follows directly that
	\begin{align}
	u(t,x) -\phi(x+ct+\beta) \rightarrow 0 \text{ uniformly in } [-mt-ct,mt-ct]  \text{ as } t \rightarrow \infty.
	\end{align}
	From Lemma \ref{apriori_bounds_u} it follows that 
	\begin{align}
	u(t,x) -\phi(x+ct+\beta) \rightarrow 0 \text{ uniformly in } \R \setminus [-mt-ct,mt-ct]  \text{ as } t \rightarrow \infty.
	\end{align}
	
	This was to be proven.

\end{proof}

\appendix

\section{Remarks on existence and uniqueness}\label{section:existence_and_uniqueness}

 As mentioned before existence and uniqueness  for solutions of \eqref{DiffEqu} can be obtained almost literally copying the proof of Theorem 2.1 in \cite{MatanoObst} or Appendix A in \cite{change_of_speed_1}.

The method of proof therein rests on the construction of suitable super- and subsolutions and repeated application of the maximum principle in order to pass in a sequence of solutions of classical initial value problems, which are monotonely increasing as the starting time decreases, to an entire solution of \eqref{DiffEqu}.

For later reference let us only mention the super- and subsolutions of \eqref{DiffEqu} used in the proof. The supersolution $w^+$ is given by
\begin{align}
w^+(t,x) = 
\begin{cases}
\min \{\phi(x+c t + \xi(t) ) + \phi(-x +c t + \xi(t)),1\} &\text{if } x \geq 0, \\
\min \{  2 \phi(c+\xi(t)),1      \}  &\text{if } x <0
\end{cases}
\end{align}
and the subsolution $w^ -$ is given by
\begin{align}
w^ -(t,x) &:= \sup \limits_{s <0} \tilde{w}^-(t+s,x) \text{ ,where} \\
\tilde{w}^-(t,x) &= 
\begin{cases}
\max \{\phi(x+c t - \xi(t) ) - \phi(-x +c t - \xi(t)),0\} &\text{if } x \geq 0, \\
0  &\text{if } x <0.
\end{cases} 
\end{align}
Here $\xi$ is the solution of the ordinary differential equation
\begin{align}
\dot{\xi}(t) &= M e^{\lambda(c+\xi)} \text{ in } t<\tilde{T} \text{ and } \\
\xi(-\infty) &=0 ,
\end{align}
$\lambda >0$, $\tilde{T}<0$ and $M>0$ only depend on $f$. There is $T<\tilde{T}$ such that
\begin{align}
\tilde{w}^-(t,x) \leq {u}(t,x) \leq w^+(t,x) \quad  \text{ for all } (t,x) \in (-\infty, T]\times \R.
\end{align}


\nomenclature{$\mathcal{L}^N$}{The $N$-dimensional Lebesgue measure}%
\nomenclature{$B_R(x)$}{The open ball of radius $R$ centred in $x$}%
\nomenclature{$C_R(x)$}{The open cube of side length $R$ centred in $x$}%
\nomenclature{$\omega_N$}{The $N$-dimensional Lebesgue-measure of the $N$-dimensional unit ball}%

%
%

\bibliographystyle{abbrv}
\bibliography{Change_of_speed-3.bib}

\end{document}